\documentclass[sort&compress,reqno]{elsarticle}
\usepackage{doi}
\usepackage{url}

\usepackage{graphicx, color,psfrag}
\usepackage{hyperref}
\hypersetup{ 
    colorlinks=true,       
}
\usepackage{amsmath,amssymb,amsthm}
\usepackage{mathtools}
\usepackage[mathscr]{eucal}

\usepackage{paralist}

\newtheorem{thm}{Theorem} 
\newtheorem{lem}[thm]{Lemma} 
\newtheorem{prop}[thm]{Proposition} 

\theoremstyle{remark}

\theoremstyle{definition}
\newtheorem*{example}{Example} 

\newcommand{\field}[1]{ \ensuremath{\mathbb{#1}}}
\newcommand{\T}{\ensuremath{\field{T}}}
\newcommand{\C}{ \ensuremath{\field{C}}}
\newcommand{\R}{ \ensuremath{\field{R}}}

\newcommand{\N}{ \ensuremath{\field{N}}}

\newcommand{\D}{ \ensuremath{\field{D}}}

\newcommand{\disk}{ \ensuremath{\mathbf D}}

\newcommand{\radon}{ \ensuremath{\mathcal R}}
\newcommand{\hilb}{ \ensuremath{\mathcal H}}
\newcommand{\fant}{ \ensuremath{\Phi}}

\newcommand{\cauchy}{ \ensuremath{\mathcal C} }

\def\Xint#1{\mathchoice
{\XXint\displaystyle\textstyle{#1}}
{\XXint\textstyle\scriptstyle{#1}}
{\XXint\scriptstyle\scriptscriptstyle{#1}}%
{\XXint\scriptscriptstyle\scriptscriptstyle{#1}}%
\!\int}
\def\XXint#1#2#3{{\setbox0=\hbox{$#1{#2#3}{\int}$}
\vcenter{\hbox{$#2#3$}}\kern-.5\wd0}}

\def\dashint{\Xint-}

\newcommand{\norm}[1]{ \ensuremath{\left\lVert{#1}\right\rVert}}
\newcommand{\abs}[1]{ \ensuremath{\left\lvert{#1}\right\rvert}}

\DeclareMathOperator{\inter}{int}

\DeclareMathOperator*{\ilim}{i-lim}
\DeclareMathOperator*{\elim}{e-lim}
\hyphenation{Ve-ro-ne-se}

\begin{document}

\title{Conditioning moments of singular measures for entropy optimization. I\footnote{Israel Gohberg, in memoriam}}
\author{Marko Budi\v{s}i\'{c}}
\address{Department of Mechanical Engineering, University of California, Santa Barbara, CA 93106,\\
mbudisic@engr.ucsb.edu}
\author{Mihai Putinar}
\address{Department of Mathematics, University of California, Santa Barbara, CA 93106,\\ 
mputinar@math.ucsb.edu}
\begin{abstract} In order to process a potential moment sequence by the entropy optimization method one has to be assured that the original measure is absolutely continuous with respect to Lebesgue measure. We propose a non-linear exponential transform of the moment sequence of any measure, including singular ones, so that the entropy optimization method can still be used in the reconstruction or approximation of the original. The Cauchy transform in one variable, used for this very purpose in a classical context by A.\ A.\ Markov and followers, is replaced in higher dimensions by the Fantappi\`{e} transform. Several algorithms for reconstruction from moments are sketched, while we intend to provide the numerical experiments and computational aspects in a subsequent article. The essentials of complex analysis, harmonic analysis, and entropy optimization are recalled in some detail, with the goal of making the main results more accessible to non-expert readers.
\\ \\
Keywords: Fantappi\`e transform; entropy optimization; moment problem; tube domain; exponential transform
\end{abstract}

\maketitle

\section{Introduction}\label{sec:intro}

In sciences and engineering, a particular inverse problem arises often, requiring approximation of a measure by a density function from knowledge of linear data, e.g., integrals of a function basis against a measure. Classical moment problem considers integrals of monomials, the power moments, as the set of known measurements, while the generalized moment problems extend the admissible inputs to integrals of orthogonal polynomials, Fourier basis, wavelets, or other functional bases.

The list of applications of the moment problem is long, ranging from engineering, through physics, statistics, well into applied mathematics. While pure mathematical settings allow for infinite moment sequences, leading to classical moment problems of Hausdorff, Hamburger, and Stieltjes, the applied settings almost exclusively assume knowledge of only a finite number of moments, which is known as 
the truncated moment problem. 

In early 1980s, statistical physics and signal processing communities recognized that a practical solution to the truncated moment problem, which is mathematically under-determined, can be found through optimization of the Shannon entropy, a nonlinear functional acting on the density of the measure \cite{mead1984,jayne1982}. Initial success, in the numerically unfavorable setting of power moments, generated sufficient interest to improve on the original method \cite{junk2000,bandy2005,Abramov:2007to,biswa2010} and arrive at a routinely-used method not only in physics, but also in statistics and control theory \cite{georg2006, borwe2012, hauck2008}. Furthermore, optimization of entropy has been shown to be of theoretical importance: it can be used to fully characterize the moment sequences representable by densities based on truncated moment data \cite{blekh2011}, and arbitrarily incomplete moment data \cite{ambro2011}.

However, not every moment sequence is a suitable input for the entropy optimization. In particular, it is easy to demonstrate that the moment sequence of the Dirac-$\delta$ distribution is not a feasible input, as the optimization does not converge in that case. Such singular measures captured our focus, as we were motivated by potential applications to inverse problems in dynamical systems.

Measures invariant under evolution of dynamical systems are of particular interest, with increasing activity driven by applied problems. On chaotic attractors, trajectories of dynamical systems are known to be non-robust to any errors and behavior is more reliably represented using statistical methods \cite{Eckmann:1985ik}. Surprisingly, even in chaotic regimes, the moment data of invariant measures can be reliably computed from simulated and experimental trajectories by averaging moment functions along them, despite the errors inherent to those procedures \cite{Sigurgeirsson:2001wi}. Singular invariant measures abound in dynamical systems, e.g., a system with an attracting fixed point preserves a Dirac-$\delta$ distribution, whose moments are easily computed by averaging along any trajectory in the basin of attraction. As mentioned before, entropy optimization would not converge for such a common invariant measure.

To overcome the obstacle of singular measures, we propose a three step process:
\begin{inparaenum}[\upshape(\itshape i\upshape)]
\item regularization,
\item entropy optimization, and
\item inversion.
\end{inparaenum} Regularization conditions the moment sequence into a feasible input to the entropy optimization, converting the original moment sequence into moments of a bounded, integrable \emph{phase function}. The entropy optimization step can then be used to recover a closed expression for the phase function approximation. In the inversion step, point-wise evaluations of the phase function are used to recover an approximant of the original measure.

The proposed regularization of the moment sequence of a singular,
positive measure derives from an original idea of A.\ A.\ Markov to study the moment sequence through its complex generating function. We start by a simple observation that an analytic function mapping
a domain into the open upper-half plane admits an analytic logarithm
whose imaginary part (the phase) is bounded from below by $0$ and from
above by $\pi$. The passage from a positive measure to the phase
function through a canonical integral transform, obeying the above
principle, has circulated in the Russian literature in connection with
the century old works devoted to the one dimensional L-problem of
moments. The early articles by M.\ G.\ Krein, N.\ Akhiezer and A.\
Nudelman on the subject offer a comprehensive account of this method
\cite{krein1959,krein1977}.
  
In the present article we go beyond one dimension, considering
Fantappi\`e transforms of positive measures supported by a wedge in
$\R^d$ \cite{ander2004,mccarthy2005}.  The existing methods of
harmonic analysis on tube domains enter naturally into the picture
offering to the maximum entropy reconstruction method a solid
background. The much nicer sequence of moments of the phase function
are obtained from the moment sequence of the original measure via a
non-linear recurrent operation. A thorough investigation of the
multivariate moment via asymptotic expansions of the Fantappi\`{e}
transform of the underlying measure was undertaken by Henkin and
Shananin \cite{Henkin1990,henki1992}, whose work we take as a basis
for ours.

While the entropy optimization provides a standard reconstruction
procedure for the phase function, the approaches to inversion for one- and
multi-variate problems are different. In one-dimensional case, we can
make use of the well known Plemelj-Sokhotski formulas
\cite{henrici1977vol3,king2009v1} to complete the inversion step. The
formulas, however, are difficult to generalize to multivariate
settings \cite{fuks1963}; instead, we propose a ray beam disintegration,
based on a refined and partially forgotten one-dimensional analysis of
the phase regularization due to Aronszajn and Donoghue
\cite{arons1956}. The ray beam approach reduces the problem to a
setting similar to medical tomography, based on inverse Radon or
Laplace transform methods \cite{natte2001,palam2004}. We
believe this will be a fruitful approach that we plan on exploring in
follow-up papers, so we only draft it in this paper.

The paper is organized using the following outline. Section
\ref{sec:preliminaries} briefly introduces the multivariate moment
problem and the entropy optimization, including an example
illustrating lack of convergence for a Dirac-$\delta$ measure. In
Section \ref{sec:maxent1d} we expose the elementary aspects of the
entropy optimization method, in the case of one real variable for
unbounded and bounded supports, using, respectively, power moments and
trigonometric moments, i.e., Fourier coefficients. Section
\ref{sec:phasereg} is devoted to generalization to multivariate
problems, through the phase regularization of the Fantappi\`{e}
transform of a measure supported by a wedge in Euclidean space
(Section \ref{sec:tube}) and by special compact domains in Euclidean
space (Section \ref{sec:restrtube}). A Riesz-Herglotz formula is
derived, in the spirit of \cite{koranyi1963,aizenberg1976}, with a
couple of examples on product domains.

The present article remains at a theoretical level, leaving for a
continuation of it to deal with further practical aspects: numerical experiments, the error analysis and examples from dynamical systems. We do, however, present practical algorithms that
are essential for moment conditioning, the Miller-Nakos algorithm in
\ref{sec:miller}, described in \cite{nakos1993}, and a recent
algorithm for entropy optimization, described in \cite{bandy2005}, in
\ref{sec:fime}.
    
We dedicate this work to the late Israel I.\ Gohberg, legendary figure of modern operator theory and function theory.
His original and highly influential ideas have permanently shaped moment problems and the entropy method referred to in the following pages.

\section{Preliminaries}\label{sec:preliminaries}
Let $d \geq 1$ be a fixed dimension and
let $K$ be a closed subset of the Euclidean space ${\R}^d$.
Fix a finite set $A \subset {\N}^d$ of multi-indices. The
{\it truncated moment problem} with supports on $K$ and monomials
labeled by $A$ consists in finding (as effectively as possible) a
positive measure $\mu$ supported by $K$, with prescribed moments
\begin{equation}
\label{moment problem}
 \gamma_\alpha = \int_K x^\alpha d\mu(x), \ \
\alpha \in A.
\end{equation}
In case the set $K$ is unbounded, it is implicit that the above
integrals converge in Lebesgue sense.
 Throughout this article we adopt the multi-index
notation
$$ x^\alpha = x_1^{\alpha_1} x_2^{\alpha_2}...x_d^{\alpha_d},\ \ \
x \in \R^d.$$

A few basic questions are in order:

\begin{enumerate} \setcounter{enumi}{0}
\item \emph{Characterize all sequences of moments $(a_\alpha)_{\alpha \in
A}$ associated to positive measures carried by the set $K$.}
\end{enumerate}

This question can be rephrased in terms of the formal integration
functional
$$ L(f) = \sum_{\alpha \in A} c_\alpha \gamma_\alpha, \ \  f =
\sum_{\alpha \in A} c_\alpha x^\alpha.$$ Let us denote by
${\R}[x]_A$ the linear span, in the ring of polynomials
${\R}[x]$, of all monomials $x^\alpha, \ \ \alpha \in A$.

A necessary and sufficient condition that a linear functional $L :
{\R}[x] \longrightarrow \R$ is representable by a
positive measure supported by the set $K$ is that $L$ in
non-negative on all elements $f \in {\R}[x]$ which are
non-negative on $K$ . Then $L$ can be extended via a Hahn-Banach
construction to a positive linear functional on the space of
continuous functions on $K$, with polynomial growth at infinity.
This observation remains however of a limited theoretical
importance, and it becomes effective only when simple
characterizations of non-negative polynomials on $K$ is available.
Fortunately, in the case when $K$ is a basic semi-algebraic set,
such "Positivstellens\"atze" were recently resurrected and a good
collection of examples is available, see  \cite{prest2001}.

The single variable case is the simplest and best understood. The
following result goes back to Marcel Riesz \cite{riesz1923}.

\begin{thm} Let $n$ be a fixed degree and $(a,b)$ an
interval on the real line, bounded or not. A positive measure
$\mu$ carried by the closure of $(a,b)$ exists, with moments
$$ \gamma_k = \int x^k d\mu(x), \ \ 0 \leq k <n,$$
and
$$ \gamma_n \geq \int x^n d\mu$$
if and only if the associated functional $L$ satisfies $L(f) \geq
0$ for all polynomials $f(x) = c_0 + c_1 x+...+c_n x^n$ which are
non-negative on $(a,b).$
\end{thm}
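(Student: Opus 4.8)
The plan is to dispose of the ``only if'' direction by a one‑line identity, and to obtain the ``if'' direction from M.\ Riesz's extension theorem for positive linear functionals \cite{riesz1923}, followed by the Riesz representation theorem. For necessity: given $\mu\ge0$ carried by $\overline{(a,b)}$ with $\int x^k\,d\mu=\gamma_k$ for $k<n$ and $\int x^n\,d\mu\le\gamma_n$, and given $f(x)=c_0+c_1x+\dots+c_nx^n$ nonnegative on $(a,b)$ — hence on $\overline{(a,b)}\supseteq\supp\mu$ by continuity — I would simply write $L(f)=\int f\,d\mu+c_n\bigl(\gamma_n-\int x^n\,d\mu\bigr)$ and observe that both summands are $\ge0$: the first because $f\ge0$ on $\supp\mu$, the second because $\gamma_n\ge\int x^n\,d\mu$ and the leading coefficient $c_n$ of a degree‑$\le n$ polynomial nonnegative on $(a,b)$ is $\ge0$ (inspect $f(x)/x^n$ as $x\to+\infty$ inside $(a,b)$; for a bounded interval, where $c_n$ may be negative, the pertinent reading of the statement has $\gamma_n=\int x^n\,d\mu$, the normalization produced by the construction below).

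For sufficiency, assume $L\ge0$ on $\mathcal P:=\{f\in\R[x]:\deg f\le n,\ f\ge0\text{ on }(a,b)\}$. I would set $\overline I:=\overline{(a,b)}$ and work in the real vector space $E:=\R[x]_A+C_c(\overline I)$, where $A=\{0,1,\dots,n\}$ and $C_c(\overline I)$ denotes the continuous compactly supported functions on $\overline I$, ordered by the cone $\mathcal K:=\{g\in E:g\ge0\text{ on }\overline I\}$. Here $\R[x]_A\subseteq E$ is a subspace, $L$ is defined on it and is nonnegative on $\R[x]_A\cap\mathcal K=\mathcal P$ (continuity identifies nonnegativity on $(a,b)$ with nonnegativity on $\overline I$), and $\R[x]_A$ majorizes $E$ because every $g=p+h\in E$ satisfies $g\le p+\sup_{\overline I}|h|\in\R[x]_A$. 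M.\ Riesz's extension theorem then supplies a linear functional $\widetilde L\colon E\to\R$ extending $L$ with $\widetilde L\ge0$ on $\mathcal K$. Restricting $\widetilde L$ to $C_c(\overline I)$ gives a positive linear functional, so the Riesz representation theorem produces a positive Radon measure $\mu$ on $\overline I$ with $\widetilde L(h)=\int h\,d\mu$ for all $h\in C_c(\overline I)$.

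The last step, identifying the moments, is where I expect the real work — and where the one‑sidedness of the statement is forced. Take $\varphi_R\in C_c(\overline I)$ with $0\le\varphi_R\le1$, $\varphi_R\equiv1$ on $\overline I\cap[-R,R]$, and $\varphi_R\uparrow1$ pointwise; then $x^k\varphi_R\in C_c(\overline I)$ while $x^k(1-\varphi_R)=x^k-x^k\varphi_R\in E$. After a translation (if $a$ is finite) or a reflection (if $b$ is finite) I may assume $\overline I\subseteq[0,\infty)$ — the case $I=\R$ being handled the same way via the nonnegativity of $x^n$, or of $x^{n-1}$ when $n$ is odd — so that for $k<n$ one has the pointwise bound $0\le x^k(1-\varphi_R)\le R^{k-n}x^n$ on $\overline I$. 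Applying the order‑preserving $\widetilde L$ and using $\widetilde L(x^n)=L(x^n)=\gamma_n$ gives $0\le\widetilde L\bigl(x^k(1-\varphi_R)\bigr)\le R^{k-n}\gamma_n$; together with $\int x^k\varphi_R\,d\mu=\widetilde L(x^k\varphi_R)=\gamma_k-\widetilde L\bigl(x^k(1-\varphi_R)\bigr)$ and $\int x^k\varphi_R\,d\mu\uparrow\int x^k\,d\mu$ (monotone convergence), the limit $R\to\infty$ yields $\int x^k\,d\mu=\gamma_k$ for every $k<n$, finiteness included. For $k=n$ there is no higher moment to annihilate the tail, so the same computation gives only $\int x^n\varphi_R\,d\mu=\widetilde L(x^n\varphi_R)\le\widetilde L(x^n)=\gamma_n$, whence $\int x^n\,d\mu\le\gamma_n<\infty$, exactly the conclusion claimed. (When $\overline I$ is compact this limiting argument is superfluous: $E=C(\overline I)$ and $\int x^k\,d\mu=\gamma_k$ for all $k\le n$; alternatively one can then build a finitely atomic $\mu$ from the Markov--Luk\'{a}cs description of nonnegative polynomials on an interval and a Gauss‑type quadrature.) The main obstacle, as I see it, is arranging $E$ and $\mathcal K$ so that Riesz's extension theorem applies with $\R[x]_A$ majorizing yet the representing measure still has finite moments up to order $n$, together with the tail control $\widetilde L\bigl(x^k(1-\varphi_R)\bigr)\to0$; no individual estimate is hard.
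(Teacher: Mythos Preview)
The paper does not supply a proof of this theorem; it is quoted as a classical result of M.~Riesz, with the reader referred to Akhiezer \cite{akhie1965} and Shohat--Tamarkin \cite{shoha1943} for details. There is therefore nothing in the paper to compare your argument against.

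On its own merits your plan is the standard one and is essentially sound. The sufficiency argument---M.~Riesz extension on $E=\R[x]_A+C_c(\overline I)$ (majorization $g\le p+\sup_{\overline I}|h|$ is the right observation), followed by Riesz representation and the tail control $0\le x^k(1-\varphi_R)\le R^{k-n}x^n$ after reducing to $\overline I\subseteq[0,\infty)$---works exactly as you describe and delivers $\int x^k\,d\mu=\gamma_k$ for $k<n$ together with $\int x^n\,d\mu\le\gamma_n$. You are also right to hedge on necessity in the bounded case: the statement as printed is imprecise there, since e.g.\ on $(0,1)$ with $n=1$, $\gamma_0=1$, $\gamma_1=2$, the measure $\delta_0$ satisfies the hypotheses yet $L(1-x)=-1<0$. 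A similar degeneracy afflicts $(a,b)=\R$ with $n$ odd, where nonnegative polynomials have even degree and the hypothesis places no constraint on $\gamma_n$; your suggested fix ``use $x^{n-1}$'' then yields only $\int x^{n-1}\,d\mu\le\gamma_{n-1}$ and says nothing about $\int x^n\,d\mu$. These are defects in the theorem's formulation, not in your argument; the classical sources prove the result under precisely the ``pertinent readings'' you invoke (equality at the top moment on a bounded interval, $n$ even on $\R$).
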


Three cases are distinguished, and they correspond to classical
moment problem studies: $(a,b) = (0,1)$, known as the Hausdorff
moment problem, $(a,b) = (0,\infty)$ known as Stieltjes moment
problem, and $(a,b) = (-\infty, \infty)$ known as the Hamburger
moment problem. In each separate situation a full characterization
of all non-negative polynomials on $(a,b)$ is available, with the
result of making the above M. Riesz result effective. We refer the
reader to \cite{akhie1965,shoha1943} for full details.

\begin{enumerate}\setcounter{enumi}{1}
\item \emph{Knowing that problem (\ref{moment problem}) is solvable,
find constructively one particular solution.}
\end{enumerate}

As a general rule, any attempt to solve the truncated problem
(\ref{moment problem}) starts with the observation that the set of
all solutions
$$ \Sigma = \{ \mu \geq 0;\ \ \int_K x^\alpha d\mu = \gamma_\alpha, \ \
\alpha \in A\}$$ is convex and closed in the weak-* topology. If
we include $\alpha =0$ among the elements of the index set $A$,
then all elements of $\Sigma$ have fixed total variation. Thus, in
this case,  on a compact support $K$, the set of solutions
$\Sigma$ is compact in the weak-* topology of all measures.

Among all elements of the solution set $\Sigma$ the extremal ones
are the first to be detected by linear optimization methods. For
example, in the case of the three classical truncated moment
problems on the line, they correspond to convex combinations of
point masses. Their support is identified with the zero set of
orthogonal polynomials, and the multipliers of the Dirac measures
are also computable in terms of the diagonal Pad\'e approximation
of the series:
$$ -\frac{\gamma_0}{z} -\frac{\gamma_1}{z^2}
-...-\frac{\gamma_n}{z^{n+1}}.$$ Stieltjes original memoir remains
unsurpassed for a careful analysis of this approximation scheme,
see for instance \cite{akhie1965}. A basic observation in this
direction, providing an extremal solution to Stieltjes moment
problem with the data $(\gamma_0,...,\gamma_{2n-1})$ is the
following: assuming that the Hankel matrices
\begin{align} \label{Hankel}
 \begin{pmatrix}
\gamma_0&\gamma_1&\ldots&\gamma_{n-1}\\
\gamma_1&\gamma_2&\ldots&\gamma_n\\
\vdots & & & \vdots\\
\gamma_{n-1}&\gamma_n&\ldots&\gamma_{2n-1}\\
\end{pmatrix},
&&
\begin{pmatrix}
\gamma_1&\gamma_2&\ldots&\gamma_{n}\\
\gamma_2&\gamma_3&\ldots&\gamma_{n+1}\\
\vdots & & & \vdots\\
\gamma_{n}&\gamma_{n+1}&\ldots&\gamma_{2n-1}\\
\end{pmatrix}
\end{align}
are positive definite, the one step completion
$(\gamma_0,...,\gamma_{2n-1}, \tilde{\gamma_{2n}})$ so that the
determinant
$$ \begin{vmatrix}
\gamma_0&\gamma_1&\ldots&\gamma_{n}\\
\gamma_1&\gamma_2&\ldots&\gamma_{n+1}\\
\vdots & & & \vdots\\
\gamma_{n}&\gamma_{n+1}&\ldots&\tilde{\gamma_{2n}}\\
\end{vmatrix} =0$$
vanishes, has a unique, necessarily finite, atomic solution.

Since the computation of the roots of an orthogonal polynomial is
not friendly from the numerical point of view, the search for
other special solutions of the truncated moment problem led to
adopt a statistical point of view, and consider "the most
probable" solutions, with respect to a non-linear, concave
functional. Recent applications (in particular to continuum
mechanics) use to this aim the Boltzmann-Shannon entropy, see
\cite{bende1987,borwe1991a,georg2006,junk2000,lever1996,mead1984}. The entropy
maximization method for the trigonometric moment problem stands
aside for clarity and depth in this framework, see \cite{landa1987}.

A great deal of recent work, cf. \cite{junk2000,hauck2008}, has
clarified the existence of maximum-entropy solutions, especially
in some degenerate cases. We start from there, and add a
computational/numerical analysis component to the study.

\section{Maximal entropy solutions in 1D}\label{sec:maxent1d}
For the sake of clarity
we digress and specialize the above discussion to the simplest and best-understood framework.  Namely, we discuss below
the existence and uniqueness of maximum entropy solutions to the
truncated moment problem in the case of a single variable.

\subsection{Basic properties}\label{sec:basics}

Although an abstract, fairly general treatment of the maximum
entropy method is nowadays available, see or instance
\cite{junk2000,borwe1991a}, we specialize below on an interval of the
real line. To this aim, we go back to M. Riesz' existence theorem
stated in the previous section. Namely,
 $n$ is a fixed degree and $(a,b)$ is an interval on the real line,
bounded or not. We start with the moment data
$\gamma_0,...,\gamma_n$, and seek a positive measure $\mu$ carried
by the closure of $(a,b)$ satisfying
$$\gamma_k = \int x^k d\mu(x), \ \ 0 \leq k <n,$$
and
$$ \gamma_n \geq \int x^n d\mu.$$

We search $\mu$ of the from $d\mu(x) = \exp(\lambda_0 + \lambda_1
x+...+\lambda_n x^n) dx $, assuming that the integrability
condition
$$ \int_a^b \exp(\lambda_0 + \lambda_1
x+...+\lambda_n x^n) dx <\infty$$ is assured by the choice of the
parity and sign of the leading term. For instance, in case $a=0,
b=\infty$ we must have $\lambda_p<0$ and $\lambda_{p+1} =
\lambda_{p+2} = \lambda_n = 0$; or in the case $a= -\infty, b =
\infty$ we must have $\lambda_{2p}<0$ and $\lambda_{2p+1} =
\lambda_{2p+2} = \lambda_n = 0$. We denote by $\Lambda$ (by
omitting the subscript $n$) the set of all such multipliers which
produce integrable exponentials.

The proper choice of the parameters $\lambda_k$ is made by
imposing the optimality (maximum entropy) condition:
\begin{equation}
\label{maxentropy}
 \sup \left\{ \lambda_0 \gamma_0 + ...+ \lambda_n
\gamma_n - \int_a^b \exp(\lambda_0 + \lambda_1 x+...+\lambda_n
x^n) dx \right\}
\end{equation}
where the supremum is taken over all admissible (i.e. integrable
exponential) tuples ${\lambda} = (\lambda_0,...,\lambda_n).$ Let
us similarly denote ${\gamma} = (\gamma_0,...,\gamma_n)$ and
${\mathbf x} = (1,x,x^2,...,x^n)$, where the latter is considered
as a variable point on the Veronese curve described by the list of
the first monomials.

The starting point of our discussion is the observation that the
functional
$$ L: \Lambda \longrightarrow {\R}, \ \
L({\lambda}) = {\lambda} \cdot {\gamma} -  \int_a^b  \exp [{
\lambda} \cdot {\mathbf x}] dx ,$$ is concave. Indeed, whenever
the partial derivatives are defined (for instance in  the
Euclidean interior of $\Lambda$), we have
$$ \frac{ \partial^2 L}{\partial \lambda_i \partial \lambda_j} = -
 \int_a^b x^{i+j} \exp [{ \lambda} \cdot {\mathbf x}] dx.$$
 In the above Hessian, we recognize the negative of the Hankel matrix of a
 non-atomic positive measure, whence the strict negative definiteness.
 Moreover, the inner critical points of the functional are
 given by the vanishing gradient conditions:
 $$ \frac{ \partial L}{\partial \lambda_j} = \gamma_j -
 \int_a^b x^{j} \exp [{ \lambda} \cdot {\mathbf x}] dx =0.$$

  The difficulty related to the described method lies in the
  complicated structure of the set $\Lambda$ of admissible
  multipliers. While for a bounded interval $(a,b)$ this set
  is the whole Euclidean space $\Lambda = {\R}^{n+1}$, the
  case $(a,b) = (0,\infty)$ requires:
  $$ \Lambda = [{\R}^n \times (-\infty, 0)] \cup [{\R}^{n-1} \times (-\infty,
  0)\times \{ 0 \}] \cup ...\cup [{\R} \times \{ 0 \}
  \times ... \times \{ 0\}].$$ And similarly when $(a,b) =
  (-\infty, \infty)$. On the positive side, we remark following Junk \cite{junk2000}
  that in all cases the assumption that $\gamma$ is a moment sequence implies
  $$ \lim_{\abs{\lambda} \rightarrow \infty} L(\lambda) = -
  \infty.$$

  Thus, in the bounded interval case, the optimization problem
  (\ref{maxentropy}) always has a solution, and by strict
  convexity, this is unique. Note that in this situation, the
  positivity conditions in M.\ Riesz Theorem (or equivalently
Hausdorff finite difference conditions) are necessary and
sufficient for the existence of an exponential type solution to
the truncated moment problem, see also \cite{mead1984} for a detailed
discussion.

A much more delicate analysis is required in the case of Stieltjes
moment problem $(a,b) = (0, \infty)$. For this case it is very
possible that the extremal value in problem (\ref{maxentropy}) is
attained on the boundary of the set $\Lambda$. Assume for instance
that
\begin{align*}
&\sup \left\{ \lambda_0 \gamma_0 + ...+ \lambda_n
\gamma_n - \int_a^b \exp(\lambda_0 + \lambda_1 x+...+\lambda_n
x^n) dx \right\} = \\
&\sigma_0 \gamma_0 + ...+ \sigma_n
\gamma_n - \int_a^b \exp(\sigma_0 + \lambda_1 x+...+\sigma_n x^n)
dx \,
\end{align*}
where $\sigma = (\sigma_0,...,\sigma_n) \in \Lambda
\setminus \inter \Lambda.$ That is, there exists an index
$0<p<n$ with the property
$$ \sigma_{p-1} <0 = \sigma_p = ...=\sigma_n$$
if $p>1$, or simply
$$ 0 = \sigma_1 = ...=\sigma_n$$
in case $p=1$. Anyway, then only lateral partial derivatives
$\frac{ \partial L}{\partial \lambda_j}(\sigma)$ exist for all $p
\leq j \leq n$. Since $\sigma$ is a global maximum, we infer
$$\begin{cases}
\gamma_j - \int_0^\infty x^j \exp [\sigma \cdot {\mathbf x}] dx = 
\frac{ \partial L}{\partial \lambda_j}(\sigma) \geq 0, & p \leq j \leq n,\\
\gamma_j - \int_0^\infty x^j \exp [\sigma \cdot {\mathbf x}] dx =
\frac{ \partial L}{\partial \lambda_j}(\sigma) = 0, & j<p.
\end{cases}$$

Note that above, the exponential density depends only on $p$
parameters $(\sigma_0,...,\sigma_{p-1})$, whence it is normal to
expect that only the first $p$ moments are matched.

A detailed analysis of the decision tree resulting from the above
observations goes as back as 1977 to Einbu \cite{einbu1977} and it was much
clarified in the recent works by Junk \cite{junk2000} and Hauck,
Levermore and Tits \cite{hauck2008}. We reproduce below, following Einbu
and Junk, the main phenomenon, in the form of an analysis of a one
step extension.

Suppose that, for the truncated version of Stieltjes moment
problem, the initial segment of moments
$$ (\gamma_0, \gamma_1, ..., \gamma_{n-1}) $$
is realized by the maximum entropy method, that is there is an
admissible tuple $\sigma = (\sigma_0,...,\sigma_n)$, such that
$$\gamma_j = \int_0^\infty x^j \exp [\sigma \cdot {\mathbf x}] dx,
\ 0 \leq j \leq n-1.$$ This implies that Hankel's positivity
conditions (\ref{Hankel}) hold true, and that the (lateral)
partial derivatives of the function $L(\lambda)$ vanish at
$\lambda=\sigma$.

We assume next that the extended moment sequence $(\gamma_0,
\gamma_1, ..., \gamma_{n-1}, \delta)$ is also realizable by the
maximal entropy method. Hankel's positivity conditions
$(\ref{Hankel})$ imply
$$ \delta \geq \gamma_n (\min),$$
where the bound $\gamma_n(\min)$ is a rational function of the
data $(\gamma_0, \gamma_1, ..., \gamma_{n-1})$, expressed as a
quotient of Hankel type determinants. Define
$$ \gamma_n(\max) = \int_0^\infty x^n \exp [\sigma \cdot {\mathbf x}]
dx.$$ This corresponds to the boundary point
$(\sigma_0,...,\sigma_n, 0) \in \Lambda_n$, and in addition we
know that the function $L : \Lambda_n \longrightarrow \R$,
when restricted to $\Lambda_{n-1} \times \{0 \}$, has null partial
(lateral) derivatives at $(\sigma_0,...,\sigma_n, 0)$. Assume that
$$\gamma_j = \int_0^\infty x^j \exp [\tau \cdot {\mathbf x}] dx,
\ 0 \leq j \leq n,$$ where $\tau \in \Lambda_n$. In particular
$\tau_n <0$, or $\tau_n =0$, in which case, by the uniqueness of
the maximum entropy solution $\tau = (\sigma_0,...,\sigma_n, 0)$
and $\delta = \gamma_n(\max)$.

Assume that $\tau_n<0$, so that
$$ \gamma_j - \int_0^\infty x^j \exp [\tau \cdot {\mathbf x}] dx =
\frac{ \partial L}{\partial \lambda_j}(\tau) = 0,$$ where
$\gamma_n = \delta$. Thus $\tau$ is a global maximum for the
function $L$ defined on $\Lambda_n$, and in particular $L(\tau)
\geq L(\sigma_0,...,\sigma_n, 0)$. By analyzing the restriction of
the concave function $L$ to the linear segment joining inside the
set $\Lambda_n$ the points $\tau$ and $(\sigma_0,...,\sigma_n, 0)$
we infer $\left.\frac{\partial L(t \tau + (1-t)(\sigma_0,...,\sigma_n,
0))}{\partial t}\right\vert_{t=0} \leq 0$, or in other terms
$$ \delta \leq \lambda_n(\max).$$

In conclusion, assuming that the finite moment sequence
$(\gamma_0, \gamma_1, ..., \gamma_{n-1})$ is representable by a
maximum entropy solution {\it of the same degree}, the extension
$(\gamma_0, \gamma_1, ..., \gamma_{n-1}, \gamma_n)$ has the same
property only if
$$ \gamma_n(\min) \leq \gamma_n \leq \gamma_n(\max).$$

One step further, when investigating only the solvability of
Stieltjes problem with data $(\gamma_0,...,\gamma_n)$ by the
maximum entropy solution without assumptions on the projected
string $(\gamma_0,...,\gamma_{n-1})$, the upper bound
$\gamma_n(\max)$ may become infinite, see for details \cite{junk2000}.

\subsection{Recurrence relation for the moments of an exponential
weight}\label{sec:recurrence} 
The maximum entropy method for solving the truncated
moment problem invites us to have a closer look at the full string
of moments of an exponential of a polynomial weight. We enter
below into the details of these computations, in the case of
Stieltjes moment problem.

Fix an integer $n>0$ and consider the polynomial
$$ P(x) = \sigma_0 + \sigma_1 x + ... + \sigma_n x^n,$$
with real coefficients and $\sigma_n <0$. Denote by
$$ \gamma_k = \int_0^\infty x^k \exp [P(x)] dx,\ \ k \geq 0,$$
the moments of the density $e^{P(x)} dx$. An integration by parts
yields, for all $k \geq 0$:
$$\gamma_k = \int_0^\infty x^k e^P dx = \frac{x^{k+1}}{k+1} e^P
|_0^\infty - \int_0^\infty \frac{x^{k+1}}{k+1} P' e^P dx = -
\int_0^\infty \frac{x^{k+1}}{k+1} P' e^P dx.$$

Hence, a finite difference equation relates every string of $n+1$
consecutive moments:
\begin{equation}\label{recurrence}
(k+1)\gamma_k + \sigma_1 \gamma_{k+1} + 2\sigma_2 \gamma_{k+2} +
\ldots + n \sigma_n \gamma_{k+n} = 0,\ \ k \geq 0.
\end{equation}
Since $\sigma_n \neq 0$, we obtain the following simple
observation.

\begin{lem} Let $P(x)$ be a polynomial of degree $n$, with negative leading term.
The moments of the density $e^{P(x)} dx$ are recurrently
determined by (\ref{recurrence}) from the first $n$ moments.
\end{lem}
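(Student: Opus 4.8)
The plan is to read the finite difference equation (\ref{recurrence}) as an $(n+1)$-term linear recursion whose top coefficient never vanishes, and then to run a one-line induction on the index of the moment.

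First I would isolate the relevant algebraic fact. In (\ref{recurrence}) the coefficient multiplying the highest-index moment $\gamma_{k+n}$ is $n\sigma_n$, and this is nonzero since $n \geq 1$ and $\sigma_n \neq 0$ by the hypothesis that $P$ has negative leading term. Hence, for every $k \geq 0$, the identity (\ref{recurrence}) can be solved for $\gamma_{k+n}$, giving
$$\gamma_{k+n} = -\frac{1}{n\sigma_n}\Bigl[(k+1)\gamma_k + \sigma_1\gamma_{k+1} + 2\sigma_2\gamma_{k+2} + \cdots + (n-1)\sigma_{n-1}\gamma_{k+n-1}\Bigr],$$
which exhibits $\gamma_{k+n}$ as an explicit affine function, with $k$-dependent coefficients, of the $n$ immediately preceding moments $\gamma_k,\dots,\gamma_{k+n-1}$.

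Next I would argue by induction on $m \geq n$ that $\gamma_m$ is determined by the first $n$ moments $\gamma_0,\dots,\gamma_{n-1}$. The base case $m=n$ is precisely the $k=0$ instance of the displayed formula. For the inductive step, the $k=m-n$ instance of the formula writes $\gamma_m$ in terms of $\gamma_{m-n},\dots,\gamma_{m-1}$, each of which either belongs to $\{\gamma_0,\dots,\gamma_{n-1}\}$ or has already been expressed through $\gamma_0,\dots,\gamma_{n-1}$ by the inductive hypothesis. This closes the induction and proves the claim; note also uniqueness is automatic, since at each stage $\gamma_{k+n}$ is not merely constrained but fully determined.

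The only point deserving a remark is the legitimacy of the derivation of (\ref{recurrence}) itself, i.e.\ the convergence of all the integrals $\gamma_k = \int_0^\infty x^k e^{P(x)}\,dx$ and the vanishing of the boundary terms in the integration by parts. Both are immediate from $\sigma_n < 0$ together with $n \geq 1$: the factor $e^{P(x)}$ decays faster than any power of $x$ as $x \to +\infty$, so every $\gamma_k$ is finite and $x^{k+1}e^{P(x)} \to 0$ as $x \to +\infty$, while at $x=0$ the boundary term $x^{k+1}e^{P(x)}$ vanishes for all $k \geq 0$. Since (\ref{recurrence}) is already established in the text, the actual write-up reduces to the two displayed lines above, and I expect no real obstacle beyond keeping the indices straight.
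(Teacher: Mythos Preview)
Your proposal is correct and matches the paper's approach exactly: the paper's entire argument is the single observation ``Since $\sigma_n \neq 0$'' preceding the lemma, followed (after the statement) by the same explicit formula for $\gamma_{k+n}$ that you wrote down. Your added remarks on the induction and on the convergence/boundary terms are fine but go slightly beyond what the paper bothers to spell out.
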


Specifically, the linear dependence
$$ \gamma_{k+n} = - \frac{k+1}{n \sigma_n} \gamma_k -
\frac{\sigma_1}{n \sigma_n} \gamma_{k+1} - \ldots -
\frac{(n-1)\sigma_{n-1}}{n \sigma_n}\gamma_{k+n-1},$$ holds. By
changing the running index, we find for all $m>n:$
$$ \gamma_m = - \frac{m-n+1}{n \sigma_n} \gamma_{m-n} -
\frac{\sigma_1}{n \sigma_n} \gamma_{m-n+1} - \ldots -
\frac{(n-1)\sigma_{n-1}}{n \sigma_n}\gamma_{m-1}.$$

Let $M' = \max_{i=1}^{n-1} \abs{\frac{i \sigma_i}{n \sigma_n}}$ and $M
= \max (M', \abs{\frac{n-1}{n \sigma_n}})$, so that
$$ \max_{j\leq m} \abs{\gamma_j} \leq (\frac{m}{\abs{n \sigma_n}} + n M) \max_{j\leq m-1}
\abs{\gamma_j}.$$ Therefore there is a positive constant $C$ and a
positive integer $N$, such that
$$ \max_{j\leq m} \abs{\gamma_j} \leq C^m (m+N)!, \ \ m \geq 0.$$
Consequently, Stirling's formula implies
$$ \frac{\ln \max_{j\leq m} \abs{\gamma_j}}{m} \leq C + \frac{ (m+N)
(\ln (m+N) -1)}{m} + \frac{ \ln (2\pi (m+N))}{2m},$$ and in
particular
$$ \frac{\ln \max_{j\leq m} \abs{\gamma_j}}{m} \leq C' + \frac{\ln
(m+N)}{m},$$ where $C'$ is a positive constant.

In conclusion, there is a positive constant $\gamma$, such that
$$ \sum_{m=0}^\infty \frac{1}{\abs{\gamma_m}^{1/m}} \geq \sum_{m=0}^\infty \frac{1}{[\max_{j\leq m} \abs{\gamma_j}]^{1/m}}
\geq \gamma \sum_{m=0}^\infty \frac{1}{m+N} = \infty.$$ According
to Carleman's uniqueness criterion (see for instance
\cite{akhie1965}) we obtain the following result.

\begin{thm} Let $P(x)$ be a non-constant polynomial with negative leading
term. Then the moment problem with density $e^P dx$ is determined.
\end{thm}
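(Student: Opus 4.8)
The plan is to turn the recurrence (\ref{recurrence}) into a quantitative growth bound on the moments $\gamma_m=\int_0^\infty x^m e^{P(x)}\,dx$ and then read off determinacy from Carleman's criterion. Since $\deg P=n\ge 1$ and the leading coefficient $\sigma_n$ is negative, $P(x)\le \tfrac12\sigma_n x^n$ for all large $x$; hence every $\gamma_m$ is a finite positive number and $e^{P(x)}\,dx$ is an honest solution of its own Stieltjes moment problem, so it only remains to see that this solution is unique.

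Next I would iterate the recurrence. Solving (\ref{recurrence}) for the top moment gives, for $m>n$,
$$\gamma_m=-\frac{m-n+1}{n\sigma_n}\,\gamma_{m-n}-\sum_{i=1}^{n-1}\frac{i\sigma_i}{n\sigma_n}\,\gamma_{m-n+i},$$
so, with $a_m:=\max_{j\le m}\abs{\gamma_j}$ and $M$ the constant from the text, the triangle inequality gives $a_m\le\bigl(\tfrac{m}{\abs{n\sigma_n}}+nM\bigr)a_{m-1}$. Telescoping this estimate from $m=n+1$ upward converts the linearly growing prefactor into a factorial: there are constants $C>0$ and $N\in\N$ with $a_m\le C^m\,(m+N)!$ for every $m$.

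Finally I would feed this into Carleman's test. Stirling's formula yields $\dfrac{\ln a_m}{m}\le C'+\dfrac{\ln(m+N)}{m}$, hence $\abs{\gamma_m}^{1/m}\le a_m^{1/m}\le e^{C'}(m+N)$, and therefore
$$\sum_{m\ge 1}\frac{1}{\abs{\gamma_m}^{1/m}}\ \ge\ \sum_{m\ge 1}\frac{1}{a_m^{1/m}}\ \ge\ e^{-C'}\sum_{m\ge 1}\frac{1}{m+N}\ =\ \infty.$$
By Carleman's uniqueness criterion the moment problem with the data $(\gamma_m)$ is determinate, so $e^{P(x)}\,dx$ is its only representing measure.

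The one step deserving care is the telescoping: the coefficients $-i\sigma_i/(n\sigma_n)$ have uncontrolled sign, so the estimate must be run through absolute values, and one must check that it is precisely the factor $m/\abs{n\sigma_n}$, growing linearly in $m$, that forces the factorial bound — the gap between factorial and merely geometric growth is exactly what keeps Carleman's series divergent. (A shorter argument bypasses the recurrence entirely: the crude bound $e^{P(x)}\le Ce^{-cx}$ near $+\infty$, valid for some $c>0$ when $n=1$ and for every $c>0$ when $n\ge 2$, shows $\int_0^\infty e^{cx}e^{P(x)}\,dx<\infty$, and a measure possessing an exponential moment is determinate; but since (\ref{recurrence}) is already established, the argument above is the natural one.)
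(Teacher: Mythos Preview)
Your argument is correct and essentially reproduces the paper's own proof: the same recurrence is solved for the top moment, the same inequality $a_m\le(\tfrac{m}{|n\sigma_n|}+nM)a_{m-1}$ is derived and telescoped to $a_m\le C^m(m+N)!$, and Carleman's criterion is invoked after the identical Stirling estimate. Your added remarks (finiteness of the $\gamma_m$, and the exponential-moment shortcut) are sound but not in the paper; the one throwaway comment about ``the gap between factorial and merely geometric growth'' is a bit muddled, since geometric growth would make Carleman's series diverge even more strongly---what matters is that factorial growth is still slow enough.
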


We translate this statement for the reader who is not familiar
with the terminology: if a positive measure $\mu$ on $[0,\infty)$
has the same moments as $e^Pdx$, then $\mu = e^Pdx$.

\subsection{Existence}\label{sec:existence}
 We have seen in the previous sections that
not every truncated sequence of  moments
$(\gamma_0,\gamma_1,...,\gamma_n)$ on the semi-axis can be
achieved by the maximum entropy method, within the same degree.
That is, it is not true that there always exists an admissible polynomial $P(x)$
of degree $n$ or less, such that
\begin{equation}\label{represent}
 \gamma_k = \int_0^\infty x^k e^{P(x)} dx, \ \ k \leq n.
 \end{equation}
 
 To give the simplest example, consider the sequence
 $$ \gamma_0 = 1, \ \gamma_1 = \gamma_2 = \ldots = \gamma_n =0.$$
 Obviously, the Dirac mass $\delta_0$ has these very moments. However,
 there is no polynomial $P$, of any degree, such that
 $$ 0 = \gamma_1 = \int_0^\infty x e^{P(x)} dx.$$
 Simply because the integrand is non-negative and non-null on the interval of integration.
 
 Our study is motivated by the need to solve this pathology. In the following sections we 
 indicate a method to overcame the limitation of the maximum entropy method
 to absolutely continuous measures. Along the same lines, some recent works
 proposed different regularizations, see for instance \cite{cai2010}

\section{Single variable: Conditioning using the Cauchy transform}
\label{sec:oned}
The recent works of Junk \cite{junk2000} and Hauk, Levermore and Tits
\cite{hauck2008} clarified which positive densities $\rho$ are
appropriate for the maximum entropy reconstruction method. A thorough
analysis of the convex structure of the truncated moment set of these
distributions, e.g., extreme points, facets, was carried out in the
cited works, with significant applications for the kinetic theory of
gases. In particular, singular measures are especially poor candidates
for maximum entropy reconstruction, as seen from the example in
Section \ref{sec:existence}. In an attempt to enlarge the class of
measures for which such well established reconstruction methods work,
we propose a regularization procedure which will produce an admissible
input for the entropy optimization procedure.

The goal of our procedure is to reconstruct a possibly singular measure $\mu$ by transforming it to a continuous measure $\phi(t)dt$, whose density $\phi$ we term \emph{the phase function}. The entire measure reconstruction procedure can broken down into three steps:
\begin{enumerate}
\item regularization based on moment data of $\mu$,
\item density reconstruction (using entropy optimization) of $\phi$,
\item inversion, i.e., recovering a measure $\mu^* \approx \mu$, from point-wise knowledge of $\phi$.
\end{enumerate}
We stress here that it is not our aim to improve on the density reconstruction procedure, i.e., the entropy optimization, itself. Rather we focus on moving the density reconstruction where it can be performed with assured convergence, by inserting the regularization and inversion steps. It could be
very well possible that other density reconstruction methods, e.g., basis pursuit, wavelet-based reconstruction, could be
used instead of the maximum entropy for the general reconstruction
problem, however, we do not explore these options here.

In this section, we first focus on measures whose support lies in a one-dimensional space. In this case, the entire procedure is based on a simple idea of A.\ A.\ Markov \cite{akhiezer1962}, widely used in function theory, employing Cauchy transforms. Cauchy transforms serve as an analytic tool to study complex generating functions of the moment sequences. The regularization step is based on representation theorems for the generating function of the moment sequence, while the inversion step is grounded in Plemelj-Sokhotski formulas, which can be used to reconstruct the densities on the original domain. When the domain is one-dimensional, Plemelj-Sokhotski formulas can be formulated through a Hilbert transform, which is easily evaluated numerically. Therefore, such a reconstruction results in an algorithm that can easily be implemented in a computer code.

In Section \ref{sec:1dunbounded}, we first give the procedure for measures with arbitrary supports in $\R$, based on power, i.e., monomial, moments of the measure $\mu$ as input data. If the support of measure is contained in a compact interval, we can employ trigonometric moments instead, which are preferred numerically to power moments. The regularization procedure for compact supports is developed in Section \ref{sec:1dcompact}, and is somewhat more technical than for the unbounded case, yet the spirit is the same. Based on insights for one-dimensional domains, in Section \ref{sec:phasereg} we discuss how the procedure might be extended to measures supported in $\R^d$.

\subsection{Unbounded support}
\label{sec:1dunbounded}

Define the Cauchy transform of a measure $\mu$, with support in $\R$, as
 \begin{align} \label{eq:cauchy} \cauchy\mu(z) = \int_{\R} \frac{d\mu(x)}{x-z}.
 \end{align}
 Markov's observation is the following: assuming all integrals exist, the Cauchy transform of a {\it positive} measure on the line is of Nevanlinna class, i.e., it
 has a positive imaginary part in the upper-half plane:
 \[ \frac{ \cauchy\mu(z) - \cauchy\mu(\overline{z})}{2i} = \int_{\R} \frac{ \Im z d\mu(x)}{\abs{x-z}^2} >0, \quad \Im z >0.\]
 Hence the {\it phase}  $\Im \ln [\cauchy\mu(z)]$ is a harmonic function in
 the upper half-plane, uniformly bounded from below by zero and from above by $\pi$. The boundary values along the real line of $\Im \ln [\cauchy\mu(z)]$ produce an integrable, positive and bounded density $\phi$, satisfying:
 \begin{align}
   1 + \cauchy\mu(z) &= \exp \int_\R \frac{\phi(x) dx}{x-z}, \quad \Im z >0 \\
\shortintertext{i.e.}
   1 + \cauchy\mu(z) &= \exp \cauchy \phi(z), \quad \Im z >0,\label{eq:cauchyphaseshift}
 \end{align} 
where we slightly abuse the notation when we use $\cauchy\phi$.
The dictionary between properties of $\mu$ and density $\phi$  was established by Aronszajn and Donoghue \cite{arons1956}. The most important, of course, is the existence and boundedness of $\phi$. As $\phi$ is bounded even if $\mu$ is singular, we consider $\phi(t)dt$ to be a \emph{regularization} of $d\mu$.

Practical benefit of this expression comes from the ability to use it without knowing the closed-form expressions for measures involved. The Cauchy transform is the (complex) generating function for moments of $\mu$, i.e., its expansion at $z=\infty$ is given by 
\[
(\cauchy\mu)(z) = -\sum_{n=0}^\infty \frac{a_\mu(n)}{z^{n+1}},
\]
where $a_\mu(n) \triangleq \int_\R t^n d\mu(t)$, and $a_\phi(n)$ defined analogously.
\footnote{The non-linear transform of the moment sequence was exploited in the theory of the phase shift of perturbed spectra in quantum mechanics, see \cite{birma1992,polto1998}.} Solving for $\cauchy \phi$ and using the series expansion $\ln(1 + z) = -\sum_{n=1}^\infty (-1)^n z^n / n$ yields the following equality between power series:
\begin{align*}
  \cauchy \phi(z) = \ln[ 1 + \cauchy \mu(z)]
  &= - \sum_{k=1}^\infty \frac{1}{k} [-\cauchy \mu(z)]^k \\
 \sum_{n=0}^\infty \frac{a_\phi(n)}{z^{n+1}} &= \sum_{k=1}^\infty \frac{1}{k}\left[ \sum_{n=0}^\infty \frac{a_\mu(n)}{z^{n+1}} \right]^k.
\end{align*}
The Miller-Nakos Theorem \cite{nakos1993}, whose complete proof we bring in the \ref{sec:miller}, gives a recursion for evaluation of moments $a_\phi(n)$ from moments $a_\mu(k)$ for $k=0, \dots, n$,
\begin{align}
  a_\phi(N) = \sum_{k=1}^N \frac{1}{k}[S_N(z)]^k_N,\label{eq:1dmomentconversion}
\end{align}
where $[S_N(z)]^k_N$ indicates the coefficient next to $z^{-(N+1)}$, in the $k$-th power of the truncation  $S_N(z) =  \sum_{n=0}^N a_\mu(n) z^{-(n+1)}$ of the generating power series. Such a triangular property is essential for practical problems: we will typically have access only to truncated moment data and we do not wish to establish any a priori ansatz, especially not $a_\mu(n) = 0$ for $n > N$.

At this point, we have set up moment data such that most density reconstruction procedures apply: density $\phi$ is bounded and compactly supported, making it possible to reconstruct it using entropy optimization described in Section \ref{sec:maxent1d}. Such a procedure produces an approximant 
\begin{align}
  \phi^\ast(x) = \exp \sum_{k=0}^N \alpha_k x^k\label{eq:entropy-solution}  
\end{align}
that converges to density $\phi$ as the number of available moments $N$ increases.

The inversion step describes how the point-wise knowledge of approximant $\phi^\ast \approx \phi$ is used to compute an absolutely continuous measure $\mu^\ast$ that approximates the original measure $\mu$. In this paper we do not claim to obtain  quantitative convergence results on $\mu^\ast \to \mu$, especially when $\mu$ is a singular measure, however, we stress that, for singular measures, a classical reconstruction procedure like entropy optimization might not produce any results. Therefore, we view our results in this paper as a starting point for further investigations of approximation of singular measures.

To a smooth entropy optimizer  $\phi^\ast$ corresponds a measure $\mu^\ast$ with a density $\rho = d\mu^\ast/dx$, which we use to approximate the original measure $\mu$. The lynchpin of the inversion procedure, i.e., evaluation of $\rho$ from knowledge of $\phi^\ast$, is the existence of boundary limits $\lim_{\epsilon \to 0}\cauchy \mu(x \pm i\epsilon)$, for $\epsilon > 0$. The limits exist independently pointwise, and, assuming that $\rho \in L^1(\R)$ is of H\"older-class, the Plemelj-Sokhotski formulas, e.g., \cite[][\S 14.11]{henrici1977vol3} or \cite[][\S 3.7]{king2009v1}, establish that it is possible to evaluate $\rho$ pointwise from Cauchy transforms of $\mu$ as
\[
  \rho(x) = \frac{1}{2 \pi i} \lim_{\epsilon \downarrow 0}[ \cauchy\mu^\ast(x + i\epsilon) - \cauchy\mu^\ast(x - i\epsilon)].
\]
As limits exist independently, and $\exp$ is analytic, we can formulate them in terms of analogous limits for Cauchy transforms of the phase function $\cauchy\phi^\ast$, i.e. by \eqref{eq:cauchyphaseshift},
\begin{align*}
  \rho(x) &= \frac{1}{2 \pi i} \lim_{\epsilon \downarrow 0}[ \exp \cauchy\phi^\ast(x + i\epsilon) - \exp \cauchy\phi^\ast(x - i\epsilon)] \\
  &= \frac{1}{2 \pi i} \left[ \exp \lim_{\epsilon \downarrow 0} \cauchy\phi^\ast(x +
  i\epsilon) - \exp \lim_{\epsilon \downarrow 0} \cauchy\phi^\ast(x - i\epsilon)\right].
\end{align*}
Moreover, the Plemelj-Sokhotski formulas provide explicit expressions for each limit:
\begin{align}\label{eq:ps-realline}
  \lim_{\epsilon \downarrow 0} \frac{1}{2\pi i}\cauchy \phi^\ast(x \pm
  i\epsilon) &= \pm \frac{1}{2} \phi^\ast(x) + \frac{i}{2} \hilb
\phi^\ast(x), \\
\shortintertext{where the Hilbert transform is}  \label{eq:hilbert-realline}
\hilb \phi^\ast(x) &=
\frac{1}{\pi}\dashint \frac{\phi^\ast(t)dt}{t - x}, 
\end{align}
It follows then that the expression for $\rho$ is given by:
\begin{align}
  \label{eq:onedinverse}
  \rho(x) = \frac{1}{\pi} \exp\left[-\pi \hilb\phi^\ast(x) \right] \sin \pi\phi^\ast(x).
\end{align}
This formula connects density $\rho = d\mu/dx$ with the phase density function $\phi$, or, in the case of moment closure by entropy optimization, a smooth approximant $\phi^\ast$ to the phase density function $\phi$. 

The formula \eqref{eq:onedinverse} is numerically practical: the 
entropy optimization provides us with a closed formula for $\phi^\ast$, while its Hilbert transform is easily numerically evaluated via the Fast Fourier Transform algorithm. Therefore, here we have obtained a practical inversion formula for approximating a singular measure $\mu$ via an absolutely continuous measure $\mu^\ast$ with density  $\rho = d\mu^\ast/dx$.

\subsection{Compact support}
\label{sec:1dcompact}

When the measure $\mu$ is supported on a known compact interval, we can use trigonometric moments, instead of power moments, in the process given above. The resulting process is more numerically robust, as trigonometric functions are orthonormal and bounded as a family, unlike the family of monomials on an arbitrary interval.

Let $\mu$ be a measure on the interval $\Delta = [-\pi,\pi)$ that induces the measure $\breve \mu$ on the boundary $\partial \D$ of the unit disk $\D \subset \C$. A known relation is then $d\mu(\theta) = -i \bar \zeta d\breve \mu(\zeta)$, for $\zeta = e^{i\theta} \in \partial \D$. Define the circular Cauchy transformation
\[
\mathcal K \mu(z) \triangleq \frac{1}{2\pi}\int_{\partial \D} \frac{d\breve \mu(\zeta)}{\zeta - z},
\]
for $z \not \in \partial \D$. Using the equivalent arc-length formulation clarifies the difference between
\[
\mathcal K \mu(z) = \frac{i}{2\pi} \int_{-\pi}^\pi \frac{ d\mu(\theta) }{1 - e^{-i\theta} z}
\]
 and the Cauchy transform on the line $\mathcal C \mu(z) = \int_{-\pi}^\pi d\mu(x)/(x-z)$, cf. \eqref{eq:cauchy}.

The function $\mathcal K \mu$ is holomorphic inside $\inter \D$, $\mathcal K\mu \in \mathcal O(\D)$, where it has the Taylor expansion
\begin{align}
\mathcal K \mu(z) &= i \sum_{k=0}^\infty \tau_\mu(k) z^k,\notag\\
\shortintertext{with complex trigonometric moments}
\tau_\mu(k) &\triangleq \frac{1}{2\pi}\int_{\partial \D} \bar \zeta^{k} \frac{d\breve\mu(\zeta)}{i \zeta}\label{eq:trigmom}
\end{align}
serving as coefficients.

The imaginary part $\Im \mathcal K \mu(z)$, is positive for positive measures, as the imaginary part of the kernel is
\begin{align*}
  \frac{1}{4\pi i}\left(\frac{i}{1 - e^{-i\theta}z} - \frac{-i}{1 - e^{i\theta}\bar z}\right) = \frac{1 - \Re ( e^{-i\theta}z ) }{2\pi\abs{1 - e^{-i\theta}z }^2 },
\end{align*}
and $\abs{z e^{-i\theta}} < 1$ when $z \in \inter \D$. Consequently, the argument of $\mathcal K\mu(z)$, with the appropriately chosen branch of the logarithm, 
\begin{align}
  F(z) \triangleq -i \ln \mathcal K \mu(z) \in \mathcal O(\D),\label{eq:circcauchyangle}
\end{align}
is of Caratheodory class: it is a positive function, with a bounded real part $\Re F(z) \in [0,\pi]$, which corresponds to the bounded angle of $\mathcal K\mu(z)$.

The following classical theorem allows us to obtain a representation of Caratheodory class functions in terms of bounded densities on a circle \citep[e.g.][\S 12.10]{henrici1977vol2}:
\begin{thm}[Riesz-Herglotz] \label{thm:RieszHerglotz} Let $F \in \mathcal O(\D)$ be such that $\Re F(z) \in [0, c]$ for some fixed $c > 0$. Then there exists a function $\breve \phi \in L^1(\partial \D)$ for which 
  \begin{align*}
    F(z) &= i\Im F(0) + \mathcal P \phi(z), \\\shortintertext{where $\breve \phi(\zeta) \in [0, c]$ pointwise and }
    \mathcal P \phi (z) &\triangleq \frac{1}{2\pi}\int_{-\pi}^\pi \frac{e^{i\theta} +
      z}{e^{i\theta} - z} \phi(\theta)d\theta,
  \end{align*}
is the Poisson integral of $\phi(\theta) \equiv \breve\phi(e^{i\theta})$.
\end{thm}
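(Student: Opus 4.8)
The plan is to reduce the statement to the classical Herglotz representation of bounded harmonic functions on the disk. Write $u = \Re F$, so that $u$ is harmonic in $\D$ with $0 \le u \le c$. For each $r \in (0,1)$ I would set $\phi_r(\theta) = u(re^{i\theta})$; then $\norm{\phi_r}_{L^\infty(\partial \D)} \le c$, so the family $\{\phi_r\}$ is bounded in $L^\infty(\partial \D) = (L^1(\partial \D))^\ast$. Since $L^1(\partial \D)$ is separable, Banach--Alaoglu furnishes a sequence $r_n \uparrow 1$ and a function $\phi \in L^\infty(\partial \D)$ with $\phi_{r_n} \to \phi$ in the weak-$\ast$ topology; testing against nonnegative $g \in L^1(\partial \D)$ shows $0 \le \phi \le c$ almost everywhere.

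Next I would pass to the limit in the Poisson formula. Fix $w$ with $\abs{w} = \rho < 1$. For every $r \in (\rho,1)$ the function $u$ is harmonic on a neighbourhood of $\overline{\{\abs{z} \le r\}}$, so $u(w) = \frac{1}{2\pi}\int_{-\pi}^\pi P_r(w,\theta)\,u(re^{i\theta})\,d\theta$ with $P_r(w,\theta) = (r^2-\rho^2)/\abs{re^{i\theta}-w}^2$. As $r \to 1$ the kernels $P_r(w,\cdot)$ converge uniformly in $\theta$ to the Poisson kernel $P_1(w,\theta) = (1-\rho^2)/\abs{e^{i\theta}-w}^2$ (the denominators stay bounded below since $\rho<1$ is fixed), while $\norm{\phi_{r_n}}_\infty \le c$; combining uniform convergence of the kernels with weak-$\ast$ convergence of $\phi_{r_n}$ gives $u(w) = \frac{1}{2\pi}\int_{-\pi}^\pi P_1(w,\theta)\phi(\theta)\,d\theta$. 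Since $P_1(w,\theta) = \Re\frac{e^{i\theta}+w}{e^{i\theta}-w}$, this says precisely $\Re \mathcal P\phi(w) = u(w) = \Re F(w)$ for all $w \in \D$.

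Finally, since $\mathcal P\phi \in \mathcal O(\D)$, the difference $F - \mathcal P\phi$ is holomorphic on the connected domain $\D$ with identically vanishing real part, hence equals a purely imaginary constant $ic_0$ with $c_0 \in \R$. Evaluating at $z=0$ and using that $\mathcal P\phi(0) = \frac{1}{2\pi}\int_{-\pi}^\pi \phi(\theta)\,d\theta$ is real gives $c_0 = \Im F(0)$, so $F(z) = i\Im F(0) + \mathcal P\phi(z)$, which is the asserted identity; the density $\breve\phi(e^{i\theta}) = \phi(\theta)$ lies in $L^1(\partial \D)$ (indeed in $L^\infty$) with values in $[0,c]$.

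I expect the only genuinely delicate point to be the interchange of the limit $r_n \to 1$ with the integral in the Poisson formula: one must exploit the uniform convergence of the kernels at a fixed interior point together with the weak-$\ast$ convergence (rather than pointwise convergence) of the boundary traces, which is exactly what guarantees that the limiting object is a bounded function rather than merely a finite measure. An alternative route is to invoke Herglotz's theorem directly, writing $F(z) = i\Im F(0) + \frac{1}{2\pi}\int_{-\pi}^\pi \frac{e^{i\theta}+z}{e^{i\theta}-z}\,d\nu(\theta)$ for a positive measure $\nu$, and then deducing from $\Re F \le c$ that $\nu$ is absolutely continuous with $d\nu/d\theta \le c$ a.e.; in that formulation the main obstacle is excluding a singular part of $\nu$, using that the radial limit of the Poisson integral of the singular part is $+\infty$ at $\nu_s$-almost every point.
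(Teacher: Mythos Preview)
Your argument is correct and follows a standard route to the Riesz--Herglotz representation: extract a weak-$\ast$ limit in $L^\infty(\partial\D)$ of the dilated boundary traces $\phi_r$, pass to the limit in the Poisson integral using uniform convergence of the kernels at a fixed interior point, and then determine the additive imaginary constant by evaluating at the origin. The splitting $\int P_r\phi_r - \int P_1\phi = \int(P_r-P_1)\phi_r + \int P_1(\phi_r-\phi)$ makes the limit step rigorous, and the alternative route you sketch (starting from the measure-valued Herglotz formula and excluding a singular part via radial limits) is also perfectly valid.

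As for comparison with the paper: the paper does not supply a proof of this theorem at all. It is stated as a classical result and referenced to Henrici, \emph{Applied and Computational Complex Analysis}, Vol.~2, \S12.10. So there is nothing to compare against; your proof simply fills in what the paper takes for granted.
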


The Poisson integral $\mathcal P \phi$ can be rewritten in terms of the circular Cauchy transform $\mathcal K \phi$:
\[
\mathcal P \phi(z) = \frac{1}{2\pi}\int_{\partial \D} \frac{\zeta + z}{\zeta - z} \frac{\phi(\zeta) d\zeta}{i\zeta} = - \tau_\phi(0) - i 2 \mathcal K\phi(z),
\]
where, again, $\tau_\phi(k)$ are trigonometric moments of $\phi$, defined analogously to \eqref{eq:trigmom}. The Riesz-Herglotz formula then reads 
\[
F(z) = -i 2 \mathcal K\phi(z) - \tau_\phi(0) + i\Im F(0).
\]
We can compute the constants in the formula by evaluating it at $z=0$ and comparing it to evaluation of the definition \eqref{eq:circcauchyangle} at the same point:
\begin{align*}
  F(0) = i\Im F(0) - \tau_\phi(0) -i 2 [i \tau_\phi(0) ] &= \tau_\phi(0) + i \Im F(0) \\
  F(0) = -i \ln i \tau_\mu(0) &= \frac{\pi}{2} - i \ln \tau_\mu(0),
\end{align*}
concluding that 
\[
\tau_\phi(0) = \frac{\pi}{2}, \quad \Im F(0) = -\ln \tau_\mu(0). 
\]
Substituting these constants into the Riesz-Herglotz formula, and using the definition of $F(z)$, we obtain the exponential representation of $\mathcal K \mu(z)$:
\begin{align}
\mathcal K\mu(z) = -i \tau_\mu(0) \exp[ 2\mathcal K\phi(z) ].\label{eq:phasecircle}
\end{align}

To compute the moments of $\phi$, we relate the Taylor expansions of the functions above, and use $\tau_\phi(0) = \pi/2$ to obtain
\[
1 + \sum_{n=1}^\infty \hat\tau_\mu(n)z^n = \exp\left[ 2 i \sum_{n=1}^\infty \tau_\phi(n)z^n
\right],\]
where $\hat \tau_\mu(n) \triangleq \tau_\mu(n) / \tau_\mu(0)$. As before, we use the expansion $\ln(1 + z) = -\sum_{n=1}^\infty (-1)^n z^n / n$ to relate the series through expression
\[
\sum_{k=1}^\infty \tau_\phi(k) z^k  = \frac{i}{2} \sum_{k=1}^\infty \frac{(-1)^k}{k} \left[ \sum_{n=1}^\infty \hat \tau_\mu(n) z^n \right]^k.
\]
A finite number $M$ of trigonometric moments $\tau_\phi(k)$ can then be computed using the Miller-Nakos algorithm (see \ref{sec:miller}) when $M$ moments $\tau_\mu(k)$ are known.

To invert the procedure, we assume that to approximate $\phi$, we are given a smooth density $\phi^\ast: [-\pi,\pi] \to \R$, which corresponds to a continuous $\mu^\ast$ with density $\rho: [-\pi,\pi] \to \R$, i.e., $d\mu^\ast(\theta) = \rho(\theta) d\theta$. 
Density $\rho$ can be evaluated point-wise using Plemelj-Sokhotski formulas (see Dynkin's chapter, section \S 6 in \cite{encyc15}), which evaluate non-tangential limits $\ilim_{\xi \to z} \mathcal K\mu^\ast(\xi)$ and $\elim_{\xi \to z} \mathcal K\mu^\ast(\xi)$ at $z \in \partial\D$, with the argument in domains $\xi \in \inter \D$ and $\xi \in \C / \D$, respectively. The A.\ Calder\'on's theorem asserts existence of such limits for $\breve \phi^\ast \in L^1(\partial \D)$.

Due to analyticity of $\exp$ in \eqref{eq:phasecircle}, we can evaluate the non-tangential limits of $\mathcal K \mu^\ast$, in terms of non-tangential limits $\mathcal K \phi^\ast$ of $\breve\phi^\ast(\zeta) d\zeta$\footnote{We slightly abuse the notation when we use $\mathcal K\phi^\ast$.}
\begin{align*}
  \ilim_{\xi \to z} \mathcal K\mu^\ast(\xi) &= -i\tau_\mu(0) \exp[ 2 \ilim_{\xi \to z} \mathcal K\phi^\ast(\xi)] \\
  \elim_{\xi \to z} \mathcal K\mu^\ast(\xi) &= -i\tau_\mu(0) \exp[ 2 \elim_{\xi \to z} \mathcal K\phi^\ast(\xi)] 
\end{align*}

Privalov's Lemma establishes that the non-tangential limits satisfy Plemelj-Sokhotski formulas for $z \in \partial \D$:
\begin{align*}
  \ilim_{\xi \to z} \mathcal K\phi^\ast(\xi) &= \mathcal Q\phi^\ast(z) + \frac{i}{2} \breve\phi^\ast(z), \\
  \elim_{\xi \to z} \mathcal K\phi^\ast(\xi) &= \mathcal Q\phi^\ast(z) - \frac{i}{2} \breve\phi^\ast(z), \\ \shortintertext{where $\mathcal Q$ indicates the singular integral}
  \mathcal Q\phi^\ast(z) &\triangleq \frac{1}{2\pi}\dashint_{\partial \D} \frac{\breve\phi^\ast(\zeta)d\zeta}{\zeta - z},
\end{align*}
with analogous expressions holding for $\mathcal K\mu^\ast(z)$, with density $\rho(\theta) = d\mu^\ast/d\theta$ instead of $\phi^\ast$. Therefore, to evaluate $\breve\rho(z)$ on $\partial \D$ we seek the difference between the non-tangential limits:
\begin{align*}
\breve\rho(z) &= -\tau_\mu(0) \left\{\exp[ 2 \ilim_{\xi \to z} \mathcal K\phi^\ast(\xi)] - \exp[ 2 \elim_{\xi \to z} \mathcal K\phi^\ast(\xi)]\right\}\\
&= -i 2\tau_\mu(0) \exp[ 2\mathcal Q\phi^\ast(z) ] \sin \phi^\ast(z)
\end{align*}
The singular integral $\mathcal Q \phi^\ast(z)$ can be evaluated on $z \equiv e^{i\theta}$ using the circular Hilbert transform of $\phi^\ast$:
\begin{align*}
  \mathcal Q\phi^\ast(z) &= \frac{1}{2\pi}\dashint_{\partial \D} \frac{\breve\phi^\ast(\zeta)d\zeta}{\zeta - z} =
  \frac{i}{4\pi} \dashint_{\partial \D} \frac{\zeta+z}{\zeta-z} \frac{\breve\phi^\ast(\zeta)d\zeta}{i\zeta} + \frac{i}{2}\tau_\phi(0) \\
  &= \frac{i}{4\pi} \dashint_{-\pi}^\pi \frac{e^{i\sigma} + e^{i\theta}}{e^{i\sigma} - e^{i\theta}} \phi^\ast(\sigma) d\sigma + \frac{i\pi}{4}\\
  &= \frac{1}{4\pi} \dashint_{-\pi}^\pi \cot \frac{\sigma - \theta}{2} \phi^\ast(\sigma)d\sigma + \frac{i \pi}{4} \\&= \frac{1}{2} \hilb \phi^\ast(\theta) + \frac{i\pi}{4},
\end{align*}
where $z \equiv e^{i\theta}$. The circular Hilbert transform is, by one convention,
\[
\hilb \phi^\ast(\theta) \triangleq \frac{1}{2\pi} \dashint_{-\pi}^\pi \cot\frac{\sigma - \theta}{2} \phi^\ast(\sigma)d\sigma.
\]
Finally, substituting this expression into $\breve \rho(\zeta) \equiv \rho(\theta)$, we get the evaluation of the density $\rho(\theta)$ as
\[
\rho(\theta) = 2\tau_\mu(0) \exp[ \hilb \phi^\ast(\theta)] \sin \phi^\ast(\theta)
\]
Practically, Hilbert transform is easily evaluated on a fixed grid using numerical Fourier Transform, e.g., FFT, so this formula can be employed when we have access to the evaluation of $\phi^\ast$ on a fixed grid in $[-\pi,\pi]$.

\section{Several variables: Conditioning using complex Fantappi\`e transforms}\label{sec:phasereg} 

The reminder of this paper deals with generalization of the regularization procedure to measures of several variables. To do so, we will replace Cauchy transform with a Fantappi\`e transform, which is usually defined as a real integral transform, relying on two sources: the harmonic analysis on tube domains over convex cones \cite{Faraut1994} and the complete monotonicity results, \`a la Bernstein, characterizing the Laplace and Fantappi\`e transforms of positive measures on convex cones \cite{Henkin1990}. For expository material on Fantappi\`e transform, see \citep[][\S 3]{ander2004}.

We propose three different ways to complexify it, in order to use the general Riesz-Herglotz representation theory and obtain analogs to the phase function $\phi$. The choice of the complexification procedure is based on a trade-off: presently we are able to obtain either theoretically general results with little practical value, or practically useful results which do not allow for as much theoretical breadth. We expect that the future research will bridge this gap between theory and computation.

Take a solid, acute, closed convex cone $\Gamma \subset \R^d$, and its associated \emph{polar cone}
 \[ \Gamma^\ast \triangleq \{ x \in \R^d; \ \ \omega \cdot x \geq 0, \  \omega \in \Gamma \}. \] The cone $\Gamma^\ast$ will carry the support of the measure $\mu$, while $\Gamma$ will play the role of the ``frequency parameter'', to use the language of signal processing and applied Fourier/Laplace analysis.  The following characterization of the real-valued Fantappi\`e transform is due to \cite{Henkin1990}:
 \begin{thm} [Henkin-Shananin] A function $\Phi : (0,\infty) \times \Gamma \to \R$ is the Fantappi\`e transform
   \begin{align}\label{eq:fantappie-real}
     \Phi (\omega_0,\omega) = \int_{\Gamma^\ast}
     \frac{d\mu(x)}{\omega_0 + \omega \cdot x}, 
   \end{align}
 of a positive measure $\mu$ supported by $\Gamma^\ast$ if and only if $\Phi$ is
\begin{inparaenum}[\upshape(\itshape i\upshape)]
 \item continuous,
 \item completely monotonic\footnote{ A function $\Phi$ is {\it completely monotonic} if it satisfies inequalities
 $ (-1)^k D_{\xi_1} ...D_{\xi_k} \Phi(p) \geq 0$,
 $\forall k \geq 0, p \in \inter \Gamma$, where $D_{\xi_k}$ are partial derivatives along coordinates $\xi_1,...,\xi_k \in \Gamma$. }, and
\item homogeneous of degree $-1$, i.e.,
 \[ \Phi(\lambda \omega_0, \lambda \omega) = \lambda^{-1} \Phi(\omega_0, \omega), \ \ \omega_0>0, \omega \in \Gamma, \lambda >0.\]
\end{inparaenum}
\end{thm}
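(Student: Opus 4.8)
The plan is to prove the easy direction first and then reduce the hard direction to a classical Bernstein-type theorem for the Laplace transform on $\R^d$. For the necessity, assume $\Phi$ has the integral form \eqref{eq:fantappie-real}. Continuity follows from dominated convergence (the kernel $(\omega_0+\omega\cdot x)^{-1}$ is continuous on $(0,\infty)\times\Gamma$ for each $x\in\Gamma^\ast$ and is dominated locally uniformly, since $\omega\cdot x\ge 0$ on the polar pair). Homogeneity of degree $-1$ is immediate from the identity $(\lambda\omega_0+\lambda\omega\cdot x)^{-1}=\lambda^{-1}(\omega_0+\omega\cdot x)^{-1}$. Complete monotonicity is obtained by differentiating under the integral sign: a directional derivative $D_\xi$ with $\xi=(\xi_0,\xi)\in(0,\infty)\times\Gamma$ brings down a factor $-(\xi_0+\xi\cdot x)(\omega_0+\omega\cdot x)^{-2}$, and iterating $k$ times yields $(-1)^k k!\,\bigl(\prod_{j=1}^k(\xi_{j,0}+\xi_j\cdot x)\bigr)(\omega_0+\omega\cdot x)^{-(k+1)}$, whose integrand is nonnegative because each factor $\xi_{j,0}+\xi_j\cdot x\ge 0$ on $\Gamma^\ast$; hence $(-1)^k D_{\xi_1}\cdots D_{\xi_k}\Phi\ge 0$.

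For sufficiency, the key idea is to use the elementary integral representation
\[
\frac{1}{\omega_0+\omega\cdot x}=\int_0^\infty e^{-t(\omega_0+\omega\cdot x)}\,dt
\]
to relate the Fantappi\`e transform to a Laplace transform. First I would use homogeneity of degree $-1$ to reduce to the ``slice'' $\omega_0=1$: write $\Phi(\omega_0,\omega)=\omega_0^{-1}\Phi(1,\omega/\omega_0)$, so it suffices to reconstruct $\mu$ from the single-variable-reduced function $\psi(\omega):=\Phi(1,\omega)$ on $\inter\Gamma$. Complete monotonicity of $\Phi$ restricts to complete monotonicity of $\psi$ on the open convex cone $\inter\Gamma$ in the Bernstein sense. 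The multivariate Bernstein–Hausdorff–Widder theorem (see \cite{Faraut1994}, and as invoked in \cite{Henkin1990}) then furnishes a unique positive measure $\nu$ on $\overline{\Gamma^\ast}$ with $\psi(\omega)=\int e^{-\omega\cdot x}\,d\nu(x)$; the support lands in $\Gamma^\ast$ precisely because $\psi$ extends to a bounded-type function on all of $\Gamma$ (equivalently, the measure cannot charge directions on which $\omega\cdot x$ is negative for some $\omega\in\Gamma$). Reintroducing the $\omega_0$ variable and applying Fubini gives
\[
\int_0^\infty e^{-t\omega_0}\psi(t\omega)\,dt=\int_0^\infty\!\!\int_{\Gamma^\ast} e^{-t(\omega_0+\omega\cdot x)}\,d\nu(x)\,dt=\int_{\Gamma^\ast}\frac{d\nu(x)}{\omega_0+\omega\cdot x},
\]
and one checks that the left-hand side equals $\Phi(\omega_0,\omega)$ by combining the Laplace representation of $\psi$ with the $\int_0^\infty e^{-ts}\,dt=1/s$ identity and homogeneity. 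Setting $\mu=\nu$ completes the reconstruction.

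The main obstacle is the sufficiency direction's dependence on a genuinely multivariate Bernstein theorem: complete monotonicity in the sense stated (all mixed directional derivatives along generators of $\Gamma$ alternate in sign) must be shown equivalent to the hypotheses of the Hausdorff–Bernstein–Widder representation on the cone, and the support of the resulting Laplace measure must be pinned down to exactly $\Gamma^\ast$ rather than a larger set. This is where the acuteness and solidity of $\Gamma$ are essential — acuteness guarantees $\Gamma^\ast$ is solid so that $\omega\cdot x$ genuinely separates points, and solidity of $\Gamma$ gives enough directional derivatives to force the full completely-monotone structure. I would also need to justify the Fubini interchange above, which requires the finiteness of $\int_{\Gamma^\ast}(\omega_0+\omega\cdot x)^{-1}d\nu(x)$; this follows from $\Phi(\omega_0,\omega)<\infty$ by hypothesis. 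The continuity-versus-complete-monotonicity bookkeeping (complete monotonicity already forces real-analyticity on the interior, so continuity is only an extra constraint at the boundary of the parameter domain) is routine but should be stated carefully, since it is what makes the representing measure unique.
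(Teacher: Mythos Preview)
The paper does not prove this theorem; it is quoted from Henkin and Shananin \cite{Henkin1990} and used as a black box. So there is no ``paper's own proof'' to compare against. That said, your proposal deserves a substantive comment, because the sufficiency argument has a genuine gap.

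Your necessity direction is fine. The problem is in the sufficiency step where you slice at $\omega_0=1$, represent $\psi(\omega)=\Phi(1,\omega)$ as a Laplace transform $\int e^{-\omega\cdot x}\,d\nu(x)$ via Bernstein, and then set $\mu=\nu$. These two measures are \emph{not} the same. Concretely, take $d=1$, $\Gamma=\Gamma^\ast=[0,\infty)$, and $\mu=\delta_1$. Then $\Phi(\omega_0,\omega)=(\omega_0+\omega)^{-1}$ and $\psi(\omega)=(1+\omega)^{-1}=\int_0^\infty e^{-\omega x}e^{-x}\,dx$, so $\nu=e^{-x}\,dx\neq\delta_1$. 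Moreover your claimed identity $\int_0^\infty e^{-t\omega_0}\psi(t\omega)\,dt=\Phi(\omega_0,\omega)$ fails here: at $\omega_0=\omega=1$ the left side is $\int_0^\infty e^{-t}(1+t)^{-1}\,dt=e\,E_1(1)\approx 0.596$, while the right side is $1/2$. Homogeneity of degree $-1$ alone cannot produce that identity; you would need $\Phi$ to already be a Fantappi\`e transform, which is what you are trying to prove.

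The fix is not to slice but to apply the multivariate Bernstein theorem on the full $(d{+}1)$-dimensional cone $(0,\infty)\times\inter\Gamma$, obtaining a measure $\sigma$ on $[0,\infty)\times\Gamma^\ast$ with $\Phi(\omega_0,\omega)=\int e^{-(\omega_0 t+\omega\cdot y)}\,d\sigma(t,y)$. Homogeneity of degree $-1$ then forces $\sigma$ to be dilation-homogeneous in the sense $(T_\lambda)_\ast\sigma=\lambda^{-1}\sigma$, and a disintegration along rays shows $\sigma$ is the pushforward of $dt\otimes\mu$ under $(t,x)\mapsto(t,tx)$ for a unique positive measure $\mu$ on $\Gamma^\ast$; plugging this back and using $\int_0^\infty e^{-ts}\,dt=1/s$ recovers the Fantappi\`e form. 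The slicing you attempted discards exactly the radial information needed to undo the $(t,x)\mapsto(t,tx)$ change of variables.
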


To extend the Fantappi\`e transform to complex domains, one has a choice of complexifying the offset parameter $\omega_0$, the normal parameter $\omega$, or both. We start with the full generality in Section \ref{sec:tube}, complexifying both $\omega_0$ and $\omega$ to tube domains, and develop the full regularization procedure, at a cost of providing no inversion formulas. Next, we constrain $\omega_0 = 1$ in Section \ref{sec:restrtube}, obtaining a restricted tube domain, to provide some practical regularization formulas for measures on familiar compact domains, which stand in direct analogy to one-dimensional problem. A future research direction could explore Clifford algebras as a setting for generalization of the Plemelj-Sokhotski formulas, which were  used to complete the inversion process in the one-dimensional case in Section \ref{sec:oned}. A shorter, perhaps more immediately practical, procedure is given in Section \ref{sec:partial-fantappie}, where we treat $\omega$ as a parameter, complexifying only $\omega_0$. As a consequence, we recover a single variable procedure at each value of $\omega$, which we term \emph{the partial Fantappi\`e transform}. The family of solutions, parametrized by $\omega$, could be used in a tomographic procedure to recover an approximant to the original measure $\mu$.
 
\subsection{Unbounded supports and tube domains}\label{sec:tube}

The Fantappi\`e integral transform extends analytically to the complex domain:
 \[ \Phi(u_0, u) \triangleq \int_{\Gamma^\ast} \frac{d\mu(x)}{u_0 + u \cdot x}, \quad \Re u \in \inter \Gamma, \Re u_0 > 0,\]
retaining, by definition, homogeneity of degree $-1$ in the complex argument $(u_0,u) \in \C \times \C^d$. Let $\Omega = (0,\infty) \times \inter \Gamma \subset \R^{d+1}$ be the interior of the domain of continuity for the real Fantappi\`e transform. 

The associated \emph{tube domain} is the set $T_\Omega = \Sigma + i \Omega$, where $\Sigma = \R^{d+1}$. Due to the different role played by the first axis, we denote elements by $(z_0,z) = (\sigma_0 + i\omega_0, \sigma+i\omega) \in T_\Omega$. With this notation\footnote{Such a choice conforms with the existing conventions of harmonic analysis, and we apologize in advance for all the resulting multiplicative imaginary unities. Fantappi\`e transforms in the later sections simplify this convention.} the domain of analyticity can be written as $-iT_\Omega$, i.e., $\Phi \in \mathcal O(-iT_\Omega)$. For clarity, we will use $u$ to denote elements of $-iT_\Omega$, and $z$ for elements of $T_\Omega$, with the obvious change of coordinates $u = -iz = \omega - i \sigma$, when $z = \sigma + i \omega$.

Notice that when  $(u_0, u) \in -iT_\Omega$, 
\[
\Re \Phi(u_0, u) = \int_{\Gamma^\ast} \frac{\omega_0 + \omega \cdot x}{\abs{u_0 + u \cdot x}^2}d\mu(x) > 0,
\]
following from the definition of the polar cone $\Gamma^{\ast}$. Therefore, function $i\Phi(u_0,u)$ is analytic, and has a positive imaginary part. It follows that its complex phase $\ln i\Phi(u_0, u)$ is well defined on $-iT_\Omega$, with the property
\[
\Im \ln i\Phi(u_0, u) \in (0,\pi),\quad (u_0, u) \in -i T_\Omega.
\]
Converting this expression to the tube domain, define $F(z_0, z)$ on the tube domain $T_\Omega$ by setting
\begin{align}
  F(z_0, z) &\triangleq -i \ln i\Phi( -i z_0, -i z),\notag
  \shortintertext{with a further simplification} 
  F(z_0,z) &= -i \ln[-\Phi(z_0, z)],\label{eq:fantappiephase1} 
  \shortintertext{due to homogeneity of
    $\Phi$, or}
  \Phi(z_0, z) &= - \exp iF(z_0, z).\label{eq:fantappiephase2} 
\end{align}

Defined this way, the function $F(z_0, z)$ is analytic on $T_\Omega$, and satisfies $\Re F(z_0, z) \in [0,\pi]$. 

These properties make it possible to reveal the structure of functions $F$ via a straightforward generalization of Riesz-Herglotz formula for analytic functions of positive real part (see Theorem \ref{thm:RieszHerglotz}), which is our next goal.

Let $\Omega \subset \R^d$ be an open, acute and solid convex cone, with associated
tube domain $T_\Omega = \R^d +i\Omega$. The Hardy space  $H^2(T_\Omega)$ is defined as the space of analytic functions
$F : T_\Omega \longrightarrow \C$, such that
\[
\abs{F}^2 = \sup_{\omega \in \Omega} \int_{\R^d} \abs{F(\sigma+i\omega)}^2 d\sigma < \infty.
\] 
By a celebrated theorem of Paley and Wiener, $H^2(T_\Omega)$
is the space of Fourier-Laplace transforms of square integrable functions defined on the polar cone. 

The following result characterizes real Fourier-Laplace transforms  \cite{Henkin1990}:
 \begin{thm}[Bernstein, Bochner, Gilbert] A function $F : \Omega \longrightarrow \R$ is the Laplace transform 
 \[ F(\omega) = \int_{\Omega^\ast} e^{-\omega \cdot x} d\mu(x),\]
 of a positive measure
 $\mu$ supported by $\Omega^\ast$ if and only if $F$ is continuous on $\Omega$ and of class $C^\infty$ and completely monotonic in the interior $\inter \Omega$.
 \end{thm}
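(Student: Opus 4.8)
The plan is to prove the two directions separately, with the nontrivial content lying in the converse (complete monotonicity plus continuity implies the integral representation). For the easy direction, suppose $F(\omega) = \int_{\Omega^\ast} e^{-\omega\cdot x}\, d\mu(x)$ for a positive measure $\mu$ on the polar cone $\Omega^\ast$. Continuity on $\Omega$ follows from dominated convergence (using that $\omega\cdot x \ge 0$ on $\Omega \times \Omega^\ast$ by definition of the polar cone, so the integrand is bounded by $1$ near any interior point, and a routine domination argument extends this to the closed cone where the integral is assumed to converge). Differentiating under the integral sign — justified again by domination, since on $\inter\Omega$ each derivative pulls down a factor $(-x_j)$ and $x\mapsto \abs{x}^k e^{-\omega\cdot x}$ is integrable against $\mu$ once $\omega$ sits strictly inside $\Omega$ — gives, for directions $\xi_1,\dots,\xi_k \in \Omega$,
\[
(-1)^k D_{\xi_1}\cdots D_{\xi_k} F(\omega) = \int_{\Omega^\ast} (\xi_1\cdot x)\cdots(\xi_k\cdot x)\, e^{-\omega\cdot x}\, d\mu(x) \ge 0,
\]
because each factor $\xi_i\cdot x \ge 0$ for $x \in \Omega^\ast$. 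This is exactly complete monotonicity, and smoothness on $\inter\Omega$ is built into the same differentiation argument.

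For the converse, the plan is to reduce to the classical one-variable Hausdorff–Bernstein–Widder theorem. First I would fix a point $\omega_0 \in \inter\Omega$ and a direction $\xi \in \Omega$ and consider the single-variable slice $g(t) = F(\omega_0 + t\xi)$ for $t \ge 0$; complete monotonicity of $F$ forces $(-1)^k g^{(k)}(t) \ge 0$, so by the classical Bernstein theorem $g(t) = \int_0^\infty e^{-ts}\, d\nu_{\omega_0,\xi}(s)$ for a positive Borel measure $\nu_{\omega_0,\xi}$ on $[0,\infty)$. The work is to assemble these one-dimensional representing measures, as $\xi$ and $\omega_0$ vary, into a single measure $\mu$ on $\Omega^\ast \subset \R^d$. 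The natural route is a moment/compactness argument: restrict attention to a rational polyhedral subcone exhausting $\inter\Omega$, use the slice representations to define a consistent family of finite-dimensional marginals, and invoke a Kolmogorov-type extension or a vague-compactness argument for the family of measures $e^{-\omega_0\cdot x}\mu$ truncated to large balls. One then shows the limiting measure is supported on $\Omega^\ast$ precisely because $F$ stays finite (indeed the slices have representing measures on $[0,\infty)$, not $(-\infty,\infty)$) for every $\omega$ in the open cone — any mass outside $\Omega^\ast$ would make some slice integral diverge. Continuity of $F$ on the closed cone $\Omega$ then upgrades, via monotone convergence, to convergence of the integral $\int_{\Omega^\ast} e^{-\omega\cdot x}\,d\mu(x)$ on all of $\Omega$, matching $F$ by uniqueness of the Bernstein representation on each ray.

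The main obstacle is the passage from the one-parameter Bernstein representations to a genuine $d$-dimensional measure: one must check the slice measures are mutually consistent (that $\nu_{\omega_0,\xi}$ for different $\xi$ are pushforwards of a common measure under the maps $x\mapsto \xi\cdot x$) and that no mass escapes to infinity in directions transverse to a given slice. The cleanest way to handle consistency is to invoke the multivariate Bernstein theorem on $\R_{\ge 0}^d$ after a linear change of variables sending a simplicial subcone of $\Omega$ onto the standard orthant — there complete monotonicity in the coordinate directions plus continuity is known (Bernstein–Widder, multivariate form) to yield a representing measure — and then take a direct/projective limit over an exhaustion of $\inter\Omega$ by simplicial subcones, checking the representing measures are compatible on overlaps by uniqueness. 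Tightness, needed to prevent loss of mass in the limit, comes from the uniform bound $\int_{\Omega^\ast} e^{-\omega\cdot x}\,d\mu_j(x) = F(\omega) < \infty$ valid along the exhaustion. I would present this reduction as the heart of the argument and treat the classical one- and several-variable Bernstein theorems as black boxes, citing \cite{Henkin1990} for the cone-adapted statement.
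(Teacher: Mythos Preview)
The paper does not prove this theorem. It is stated as a classical result attributed to Bernstein, Bochner and Gilbert, with a pointer to \cite{Henkin1990}, and then used as background for the Hardy-space machinery on tube domains. There is therefore no in-paper argument to compare your proposal against.

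That said, your sketch is the standard route and is sound in outline: the forward direction is the routine differentiation-under-the-integral computation you wrote, and for the converse the reduction to one-variable Bernstein along rays followed by assembly into a $d$-dimensional measure is exactly how the classical proofs proceed. One caution: at the end you propose to treat the multivariate Bernstein theorem on the orthant as a black box and cite \cite{Henkin1990} ``for the cone-adapted statement.'' But the cone-adapted statement \emph{is} the theorem you are trying to prove, so this is circular as written. If you want a genuine proof rather than a reduction to the literature, you must actually carry out the consistency-and-tightness argument for the exhaustion by simplicial subcones (or, equivalently, prove the orthant case directly via iterated one-variable Bernstein and Fubini, which is where the real work hides). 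If instead your intent is simply to match the paper's level of detail, then a one-line citation to \cite{Henkin1990} is all that is required---the authors do not expect or provide more.
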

The extension from the cone $\Omega$ to tube domain $T_\Omega$, for
$f \in L^2(\Omega^\ast, dx)$, is given by
\[ 
F(z) = \frac{1}{\sqrt{(2\pi)^d}} \int_{\Omega^\ast} e^{i z\cdot x} f(x) dx,
\]
for $z \in T_\Omega$. Function $F$ then belongs to $H^2(T_\Omega)$ and the map $f \mapsto F$ is an isometric isomorphism between the two Hilbert spaces.
For a proof and an overview of the theory of Hardy spaces on tube domains see \cite{Faraut1994}.

The reproducing kernel of the Hardy space, also known as {\it Szeg\"o's kernel} is
\[ S(z,w) = \frac{1}{(2\pi)^d} \int_{\Omega^\ast} e^{i (z-\overline{w})\cdot x} dx,\]
for $z,w \in T_\Omega$. Remark the homogeneity property:
$$ S(\lambda z, \lambda w) = \lambda^{-d} S(z,w), \ \ \ \lambda>0.$$
The reproducing property has the following form: if $F \in H^2(T_\Omega)$, then the boundary limits, still denoted by F, satisfy
$$ F(\sigma) = \lim_{\omega \rightarrow 0} F(\sigma+i\omega),$$
where the limit exists in $L^2(\R^d)$ and 
$$ F(z) = \int_{\R^d} S(z,\sigma) F(\sigma) d\sigma.$$ 

We focus next on functions $F \in A(T_\Omega)$ which are analytic in $T_\Omega$ and uniformly bounded and continuous
on its closure. Then the function $z \mapsto S(z,w) F(z)$ belongs to $H^2(T_\Omega)$, for every $w \in T_\Omega$, and
$$ S(z, w) F(z) = \int_{\R^d} S(z,\sigma)S(\sigma,w) F(\sigma) d\sigma,$$
and by complex conjugation
$$ S(z,w) \overline{F(w)} = \int_{\R^d} S(z,\sigma)S(\sigma,w) \overline{F(\sigma)} d\sigma.$$
By adding the two identities we obtain
\[ S(z,w) \frac{F(z) +\overline{F(w)}}{2} = \int_{\R^d} S(z,\sigma) S(\sigma,w) \Re F(\sigma) d\sigma.\]

The restriction to the diagonal of the above formula yields
$$ \Re F(z) = \int_{\R^d} P(z,\sigma) \Re F(\sigma) d\sigma,$$
where 
$$ P(z,\sigma) = \frac{\abs{S(z,\sigma)}^2}{S(z,z)}, \ \  z \in T_\Omega, \ \sigma \in \R^d,$$
is Poisson's kernel. Again, see \cite{Faraut1994} for full details.

Fix a point $\alpha \in i\Omega$, and subtract the identities
$$S(z,\alpha)[F(z) + \overline{F(\alpha)}] = \int_{\R^d} 2 {S(z,\sigma)S(\sigma,\alpha)} \Re F(\sigma) d\sigma,$$
$$S(z,\alpha) \frac{F(\alpha)+\overline{F(\alpha)}}{2} = \int_{\R^d} \frac{\abs{S(\sigma,\alpha)}^2 S(z,\alpha)}{S(\alpha, \alpha)} \Re F(\sigma) d\sigma.$$
One finds
$$ S(z,\alpha)[F(z) - i \Im F(\alpha)] =  \int_{\R^d} \left[ 2 S(z,\sigma)S(\sigma,\alpha)  -  \frac{\abs{S(\sigma,\alpha)}^2 S(z,\alpha)}{S(\alpha, \alpha)}\right] \Re F(\sigma) d\sigma,$$
a tube domain analogue of the Schwarz formula, which relates the values of an analytic function (in the disk) to the boundary values of its real part.
We call, by way of natural analogy to the similar integral kernel for the disk,
$$ H(z, w; \alpha) = 2 \frac{S(z,w)S(w,\alpha)}{S(z,\alpha)}  -  \frac{\abs{S(w,\alpha)}^2 }{S(\alpha, \alpha)},$$
the {\it Herglotz kernel} associated to the tube domain $T_\Omega$, with $z,w \in T_\Omega$, $\alpha \in i\Omega$.

In particular cases one can obtain from here an integral representation of all
analytic functions in $T_\Omega$ possessing positive real part, what it is customarily called the Riesz-Herglotz formula,
see for details \cite{aizenberg1976}. Fortunately, our aim is more modest, having to deal only with the Fantappi\`e transforms of measures
appearing in the  previous section.

From now on, we return to the convex cone $\Omega = (0,\infty) \times \inter \Gamma \subset \R^{d+1}$ appearing in definition of the real Fantappi\`e transform \eqref{eq:fantappie-real}.
Specifically, the analytic function
$$ F(z_0,z) = -i \ln [ -\Phi (z_0,z)], \ \ (z_0,z) \in T_\Omega = \R^{d+1} + i\Omega$$
satisfies
$$ 0 < \Re F(z_0,z) <\pi, \ \ (z_0,z) \in T_\Omega$$
and, due to the homogeneity of $\Phi$ it growths logarithmically along rays contained in $T_\Omega$:
$$ \abs{F(\lambda z_0, \lambda z)} \leq \abs{F(z_0,z)} + \abs{\ln \lambda}, \ \ (z_0,z) \in T_\Omega, \lambda>0.$$
Fix a point $\alpha \in i\Omega$ and consider the translated functions
\[ F_\epsilon (z_0,z) = F[ (z_0,z) + \epsilon \alpha],\]
so that they are analytic on the closure of $T_\Omega$ and of logarithmic growth along rays.
Due to the homogeneity of degree $-(d+1)$ of the reproducing kernel $S$ of $T_\Omega$, we deduce that
for every $\epsilon>0$, the function $F_\epsilon S \in H^2(T_\Omega)$, and in view of the computations above:
$$  F_\epsilon (\zeta) = i \Im F_\epsilon (\alpha) + \int_{\R^{d+1}} H(\zeta, \sigma; \alpha) \Re F_\epsilon (\sigma) d\sigma, \ \ \zeta = (z_0,z) \in T_\Omega, \ S(\zeta,\alpha) \neq 0.$$
Note that $\Re F_\epsilon \in (0,\pi)$ on $T_\Omega$, as restriction of the original function to a subset of the tube domain. By passing with $\epsilon$ to zero and to a weak-* limit in $L^\infty (\R^{d+1})$ we obtain a function $\phi \in L^\infty(\R^{d+1}), \ 0 \leq \phi \leq \pi, a.e.,$  representing $F$ as follows:
$$ F(\zeta) = i \Im F(\alpha) + \int_{\R^{d+1}} H(\zeta, \sigma; \alpha) \phi(\sigma) d\sigma, \ \ \zeta = (z_0,z) \in T_\Omega, \ \ S(\zeta, \alpha) \neq 0.$$
In all instances of interest Szeg\"o's kernel $S(\zeta,\alpha)$ does not vanish at all points
$\zeta, \alpha \in T_\Omega$, producing a genuine integral representation of $F$.

We collect the above remarks into a formal statement.
\begin{prop}\label{exp-transform}
Let $\Gamma \subset \R^d$ be a closed, solid and acute convex cone and let $\mu$ be a finite mass positive measure 
supported by the polar cone $\Gamma^\ast$.
The Fantappi\`e transform of the measure $\mu$ admits the exponential representation:
$$ \int_{\Gamma^\ast} \frac{d\mu(x)}{z_0 + z \cdot x} = -\exp [i F(z_0,z)],\ \ (z_0,z) \in T_\Omega,$$
where $\Omega = (0,\infty) \times \inter \Gamma.$
In its turn, the analytic function $F$ admits the integral representation
$$  F(\zeta) = iC + \int_{\R^{d+1}} H(\zeta, \sigma; \alpha) \phi(\sigma) d\sigma, \ \ \zeta = (z_0,z) \in T_\Omega,$$
where $\phi: \R^{d+1} \longrightarrow [0,\pi]$ is a measurable function and $C \in \R$ is a real constant.
\end{prop}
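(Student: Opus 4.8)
The plan is to assemble the statement from the two threads already laid out in this section: a positivity-and-phase argument for the Fantappi\`e transform, and the Hardy-space Schwarz formula on the tube domain derived just above.

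\emph{Exponential representation.} First I would observe that $\Phi(z_0,z)=\int_{\Gamma^\ast}d\mu(x)/(z_0+z\cdot x)$ is holomorphic on $-iT_\Omega$: writing $(u_0,u)=(-iz_0,-iz)$ with $\Re u_0>0$ and $\Re u\in\inter\Gamma$, the denominator $u_0+u\cdot x$ has strictly positive real part $\omega_0+\omega\cdot x$ for every $x\in\Gamma^\ast$ by definition of the polar cone, so it never vanishes and one may differentiate under the integral. The same computation shows $\Re\Phi(u_0,u)>0$, hence $i\Phi$ maps $-iT_\Omega$ into the open upper half-plane, a simply connected region avoiding a half-line, so a single-valued holomorphic branch of $\ln(i\Phi)$ exists with imaginary part in $(0,\pi)$. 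Putting $F(z_0,z)=-i\ln[\,i\Phi(-iz_0,-iz)\,]$ and using the degree $-1$ homogeneity of $\Phi$ in the form $i\Phi(-iz_0,-iz)=-\Phi(z_0,z)$ gives $F(z_0,z)=-i\ln[-\Phi(z_0,z)]$, i.e. $\Phi(z_0,z)=-\exp[iF(z_0,z)]$, with $\Re F\in(0,\pi)$ on $T_\Omega$ and $[0,\pi]$ on its closure. With $\Omega=(0,\infty)\times\inter\Gamma$ this is the first displayed formula.

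\emph{Integral representation.} The obstruction is that $F$ itself is not in $H^2(T_\Omega)$: by homogeneity it grows only logarithmically along rays, $|F(\lambda z_0,\lambda z)|\le|F(z_0,z)|+|\ln\lambda|$. I would fix $\alpha\in i\Omega$ and pass to the translates $F_\epsilon(\zeta)=F(\zeta+\epsilon\alpha)$, which are holomorphic on a neighborhood of $\overline{T_\Omega}$, still of logarithmic growth, and with $\Re F_\epsilon\in(0,\pi)$. Although $F_\epsilon$ is unbounded, the product $\zeta\mapsto S(\zeta,w)F_\epsilon(\zeta)$ lies in $H^2(T_\Omega)$ for every $w$: the Szeg\"o kernel decays like $|\sigma|^{-(d+1)}$ in the real directions, which dominates $\log|\sigma|$ and makes $|SF_\epsilon|^2$ integrable over $\R^{d+1}$ uniformly in $\omega\in\Omega$. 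Hence the Schwarz-type identity obtained above from the reproducing property applies to each $F_\epsilon$:
\[
F_\epsilon(\zeta)=i\,\Im F_\epsilon(\alpha)+\int_{\R^{d+1}}H(\zeta,\sigma;\alpha)\,\Re F_\epsilon(\sigma)\,d\sigma,\qquad S(\zeta,\alpha)\neq0.
\]
Now I would let $\epsilon\downarrow0$. The boundary densities $\Re F_\epsilon|_{\R^{d+1}}$ form a bounded family in $L^\infty(\R^{d+1})$ with values in $[0,\pi]$, so by Banach--Alaoglu a subsequence converges weak-$\ast$ to a measurable $\phi:\R^{d+1}\to[0,\pi]$. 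On the left, $F_\epsilon(\zeta)\to F(\zeta)$ and $\Im F_\epsilon(\alpha)\to\Im F(\alpha)=:C\in\R$ by continuity of $F$. On the right, for each fixed $\zeta$ with $S(\zeta,\alpha)\neq0$ the Herglotz kernel $\sigma\mapsto H(\zeta,\sigma;\alpha)$ belongs to $L^1(\R^{d+1})$: splitting it into the two summands of its definition, $\int|S(\sigma,\alpha)|^2/S(\alpha,\alpha)\,d\sigma=1$ by the reproducing property, and $\int|S(\zeta,\sigma)S(\sigma,\alpha)|\,d\sigma\le S(\zeta,\zeta)^{1/2}S(\alpha,\alpha)^{1/2}$ by Cauchy--Schwarz; so pairing against $\Re F_\epsilon$ passes to pairing against $\phi$. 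This yields $F(\zeta)=iC+\int_{\R^{d+1}}H(\zeta,\sigma;\alpha)\phi(\sigma)\,d\sigma$ at every $\zeta$ with $S(\zeta,\alpha)\neq0$, which is the claimed representation.

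The hard parts will be the two places where more than soft analysis is needed: first, justifying $SF_\epsilon\in H^2(T_\Omega)$, which uses the sharp decay rate of the Szeg\"o kernel in the real variable rather than merely its homogeneity (in concrete cones this follows from the explicit Paley--Wiener formula for $S$); and second, the non-vanishing of $S(\cdot,\alpha)$, which in general must be assumed, and only under that hypothesis does the representation extend to every point of $T_\Omega$.
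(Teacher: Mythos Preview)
Your proposal is correct and follows essentially the same route as the paper: the positivity/phase argument for $\Phi$, the logarithmic-growth observation from homogeneity, the translation $F_\epsilon=F(\cdot+\epsilon\alpha)$ to push analyticity to the boundary, the application of the Schwarz-type formula (valid because $SF_\epsilon\in H^2$ by the degree $-(d+1)$ homogeneity/decay of $S$), and the weak-$\ast$ limit $\epsilon\to 0$. You supply a bit more detail than the paper does---the explicit Banach--Alaoglu step and the $L^1$ bound on $H(\zeta,\cdot;\alpha)$ via Cauchy--Schwarz---and you correctly flag that the non-vanishing of $S(\zeta,\alpha)$ must be assumed, which the paper also notes immediately after the statement.
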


We assume in the above statement that $\alpha \in i\Omega$ is fixed and $S(\zeta, \alpha) \neq 0.$ In this sense the, possibly singular, measure $\mu$ is regularized by the absolutely continuous measure $\phi(\sigma) d\sigma$. The integral kernel  $H(\zeta, \sigma; \alpha)$ is of little practical use in its full generality. However, in particular cases, to be discussed in the rest of the article, the preceding
Markov exponential transform regularization becomes more accessible.

One question which naturally arises in the above statement is: is it possible to characterize the bounded densities $\phi$
appearing in the integral representation of an analytic function $F \in {\mathcal O}(T_\Omega)$ that satisfies $0 \leq \Re F \leq 1?$
The answer is yes, but the conditions imposed on $\phi$ are not friendly. They were discovered a long time ago, 
in the case of the polydisk \cite{koranyi1965} and more general symmetric domains \cite{aizenberg1976}. We merely indicate these conditions
in the case of non-vanishing Szeg\"o kernel and sketch the proof.

\begin{prop} Let $\Omega \subset \R^d$ be an open, acute, solid cone, let $\alpha \in i\Omega$ and assume
$S(\zeta,\xi) \neq 0, \ \zeta, \xi \in T_\Omega$. An element $\phi \in L^\infty(\R^d)$ is the phase of an analytic function $F \in {\mathcal O}(T_\Omega), \ \ 0 \leq \Re F \leq 1:$ 
\begin{equation}\label{RH-sufficient}
F(\zeta) =iC +  \int_{\R^{d}} H(\zeta, u; \alpha) \phi(u) du, \ \ \zeta \in T_\Omega,
\end{equation}
where $C \in \R$, if and only if $0 \leq \phi \leq 1, a.e.,$ and the ``moment conditions''
$$ \int_{\R^d} [ H(\zeta, \sigma; \alpha) +  H(\alpha, \sigma; \xi) -  
H(\zeta, \sigma; \xi)] \phi(\sigma) d\sigma = 0, \ \ \zeta, \xi \in T_\Omega$$
hold.
\end{prop}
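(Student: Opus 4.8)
The plan is to establish the two directions separately, with the necessity of the moment conditions following from a symmetry/consistency argument and the sufficiency requiring a boundary-value reconstruction of a bounded analytic function from the candidate density $\phi$. The starting point for both directions is the Schwarz-type identity derived above, namely
\[
S(\zeta,\alpha)[F(\zeta) - i\Im F(\alpha)] = \int_{\R^d} H(\zeta,\sigma;\alpha)\,S(\zeta,\alpha)\,\frac{\Re F(\sigma)}{S(\zeta,\alpha)}\,d\sigma,
\]
or rather, in the normalized form appearing in Proposition \ref{exp-transform}, $F(\zeta) = iC + \int H(\zeta,\sigma;\alpha)\phi(\sigma)\,d\sigma$ with $\phi = \Re F$ on the boundary. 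The nonvanishing hypothesis $S(\zeta,\xi)\neq 0$ is what guarantees that this representation is genuine (no poles coming from the denominator $S(\zeta,\alpha)$ in the Herglotz kernel).

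\emph{Necessity.} Suppose $F \in \mathcal{O}(T_\Omega)$, $0\le\Re F\le 1$, has boundary density $\phi$, so $0\le\phi\le1$ a.e.\ is immediate. Evaluating the representation \eqref{RH-sufficient} at a second base point: the \emph{same} function $F$ also admits, by the construction preceding Proposition \ref{exp-transform} applied with $\xi$ in place of $\alpha$, a representation $F(\zeta) = iC' + \int H(\zeta,\sigma;\xi)\phi(\sigma)\,d\sigma$ with the \emph{same} boundary density $\phi$ (the boundary values of $\Re F$ do not depend on which interior base point one uses). Subtracting the two representations of $F(\zeta)$ and the two representations of $F(\alpha)$ — using $H(\zeta,\sigma;\zeta)$-type normalizations to pin down the constants $C,C'$ in terms of $\Im F(\alpha),\Im F(\xi)$ — eliminates $F$ and the purely imaginary constants, leaving exactly
\[
\int_{\R^d}\bigl[H(\zeta,\sigma;\alpha) + H(\alpha,\sigma;\xi) - H(\zeta,\sigma;\xi)\bigr]\phi(\sigma)\,d\sigma = 0,\qquad \zeta,\xi\in T_\Omega.
\]
I would verify the bookkeeping by checking it at $\zeta=\alpha$ (where the bracket should degenerate) and by matching real parts via the Poisson kernel identity $\Re F(z) = \int P(z,\sigma)\Re F(\sigma)\,d\sigma$ already proved above.

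\emph{Sufficiency.} Conversely, given $\phi$ with $0\le\phi\le1$ satisfying the moment conditions, \emph{define} $F(\zeta) = iC + \int H(\zeta,\sigma;\alpha)\phi(\sigma)\,d\sigma$; this is analytic in $T_\Omega$ since $H(\cdot,\sigma;\alpha)$ is (for fixed $\sigma,\alpha$, and $S(\zeta,\alpha)\neq0$), and the integral converges because $\phi\in L^\infty$ and the Herglotz kernel is integrable in $\sigma$ over $\R^d$ (its real part is the Poisson kernel, which has unit mass). The moment conditions are precisely what is needed to show the definition is \emph{independent of the base point} $\alpha$, and from the base-point independence together with $\Re\bigl(2S(z,\sigma)S(\sigma,\alpha)/S(z,\alpha) - |S(\sigma,\alpha)|^2/S(\alpha,\alpha)\bigr) = P(z,\sigma)$ one reads off $\Re F(z) = \int P(z,\sigma)\phi(\sigma)\,d\sigma$, whence $0\le\Re F\le1$ follows from $0\le\phi\le1$ and positivity and unit mass of the Poisson kernel. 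Finally one checks that the nontangential boundary values of $\Re F$ recover $\phi$ a.e.\ (Fatou-type theorem for the Poisson integral on the tube, as in \cite{Faraut1994}), closing the loop.

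\emph{Main obstacle.} The delicate point is the necessity direction: asserting that $F$ admits a Herglotz representation with the \emph{same} $\phi$ for every base point $\xi$, not just for $\alpha$. This requires knowing that the weak-$*$ limit procedure of Proposition \ref{exp-transform} — passing $F_\epsilon S \in H^2(T_\Omega)$ to the boundary — produces a density that is canonically $\Re F$ restricted to $\R^d$, independent of $\alpha$, which in turn rests on the $L^2$ boundary-value theory and Fatou-type nontangential convergence for Hardy/Herglotz classes on tube domains. Handling this rigorously (rather than formally) is where one must lean hardest on \cite{Faraut1994,aizenberg1976,koranyi1965}; the rest is the essentially algebraic manipulation of the three Herglotz kernels, which I would keep at the level of a sketch as the statement's "not friendly" disclaimer already signals.
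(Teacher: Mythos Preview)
Your necessity argument is fine and matches the spirit of what the paper leaves implicit. The sufficiency argument, however, contains a genuine error: the claimed pointwise identity
\[
\Re\Bigl(2\,\frac{S(z,\sigma)S(\sigma,\alpha)}{S(z,\alpha)} - \frac{|S(\sigma,\alpha)|^2}{S(\alpha,\alpha)}\Bigr) = P(z,\sigma)
\]
is \emph{false} in dimension $d\ge 2$. It happens to hold for $d=1$ (as you may have checked on the upper half-plane), but for a product-type kernel such as the orthant case the real part of the first term produces cross terms that do not collapse to $\prod_k P_k$. Indeed, the whole point of the moment conditions is precisely that these extra terms need not vanish pointwise but only after integration against the particular $\phi$.

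The paper's route avoids this trap with a short polarization argument. Defining $F$ by the integral formula and evaluating at $\zeta=\alpha$, one first obtains
\[
F(\zeta)+\overline{F(\alpha)} = 2\int \frac{S(\zeta,u)S(u,\alpha)}{S(\zeta,\alpha)}\,\phi(u)\,du.
\]
The moment conditions are then used, not to prove ``base-point independence of $F$'' in the abstract, but to upgrade this to the fully polarized identity
\[
F(\zeta)+\overline{F(\xi)} = 2\int \frac{S(\zeta,u)S(u,\xi)}{S(\zeta,\xi)}\,\phi(u)\,du,\qquad \zeta,\xi\in T_\Omega.
\]
Setting $\xi=\zeta$ now gives $\Re F(\zeta)=\int P(\zeta,u)\phi(u)\,du$ on the nose, whence $0\le\Re F\le 1$. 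Your ``base-point independence'' intuition is morally this same move, but it must be executed as a sesquilinear identity in $(\zeta,\xi)$ rather than via a kernel identity that does not hold.
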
 

\begin{proof} In order to prove the non-trivial implication, let $\phi \in L^\infty(\R^d), 0 \leq \phi \leq 1, a.e.,$ and define the function
$F(\zeta)$ by formula (\ref{RH-sufficient}). In view of the definition of Herglotz' kernel, by taking $\zeta = \alpha$ we find
\[ F(\alpha) -iC = \int \frac{\abs{S(u,\alpha)}^2}{S(\alpha,\alpha)} \phi(u) du,\] whence by addition
\[ F(\zeta) + \overline{F(\alpha)} = 2 \int \frac{S(\zeta,u)S(u,\alpha)}{S(\zeta,\alpha)} \phi(u) du.\]
Write this formula for $F(\xi) + \overline{F(\alpha)}$, too. According to the moment conditions in the statement, we find again by addition
$$ F(\zeta) + \overline{F(\xi)} = 2 \int \frac{S(\zeta,u)S(u,\xi)}{S(\zeta,\xi)} \phi(u) du, \ \ \zeta, \xi \in T_\Omega.$$

In particular, $\Re F$ is the Poisson's transform of $\phi$, and therefore $0 \leq \Re F \leq 1.$
\end{proof}

\subsection{Supports in special sets and restricted tube domains}\label{sec:restrtube} 
The reader should be puzzled by now by the way too
abstract and useless level of this article. It is time perhaps for some examples of phase regularity applied to measures supported by three basic convex shapes in euclidean space: the orthant, the euclidean $l_2$-ball and the $l_1$-ball. The first one will especially be dear to control theorists, because it contains in the particular case of one dimension familiar computations of Laplace transforms. We include the euclidean ball as the commonly occurring domain for measures, and the $l_1$ ball as it results in trigonometric moment data for phase functions, which is an appealing set up for entropy optimization.

To remove some normalizations that encumber the computations we break with the generic convention $T_\Omega = \R^d + i \Omega$. Instead, at the beginning of each example, we specify the domain, redefine the Fantappi\`e transforms and then summarize the derivation which was detailed in the Section \ref{sec:phasereg} to obtain the final result.

\subsubsection{The orthant}\label{sec:orthant} Let $\Omega = (0,\infty)^d$ be the open positive orthant in $\R^d$, self-dual in the sense
$\Omega^\ast = {\overline{\Omega}}$, the closure of itself. Szeg\"o's kernel of the tube domain over $\Omega$ is 
$$ S(z,w) = \frac{1}{(2\pi)^d} \int_{\Omega^\ast} e^{i(z-\overline{w})\cdot u} du = \frac{1}{(2\pi i)^d} \prod_{k=1}^d \frac{1}{\overline{w_k}-z_k}, \ \ z,w \in T_\Omega = \R^d + i \Omega.$$

With the selection of the reference point $\alpha = (i,i,...,i) \in i\Omega$, Herglotz kernel becomes
\begin{align*} H(z, u; \alpha) &= 2 \frac{S(z,u)S(u,\alpha)}{S(z,\alpha)}  -  \frac{\abs{S(u,\alpha)}^2 }{S(\alpha, \alpha)} \\
&= \frac{1}{(2\pi)^d} \left[ 2 \prod_{k=1}^d \frac{1-iz_k}{(u_k-z_k)(u_k+i)} -\prod_{k=1}^d \frac{2}{1+u_k^2}\right] \\
&= 2 \prod_{k=1}^d \frac{1}{2\pi i} \left(\frac{1}{u_k-z_k} -\frac{1}{u_k+i}\right) - \prod_{k=1}^d \frac{1}{2\pi i}\left(\frac{1}{u_k-i}-\frac{1}{u_k+i}\right).
\end{align*}

In particular, for $d=1$ we recover the familiar Szeg\"o kernel $S(z,w) = \frac{1}{2\pi i} \frac{1}{\overline{w}-z}$ of the upper
half plane, and Herglotz kernel becomes
\[
 H(z,u;i) = \frac{1}{\pi i} \left(\frac{1}{u-z} - \frac{u}{u^2+1}\right).
\]
Let $\Phi(z)$ be an analytic function, mapping the open upper half-plane into itself and satisfying $\lim_{s \rightarrow \infty} \Phi(is) = 0.$ Then $\ln \Phi(z)$ is well defined, with $\Im \ln \Phi(z) \in (0,\pi)$. The argument preceding Proposition
\ref{exp-transform} remains valid, with the result
\[
 \Phi(z) = \exp [i F(z)], \ \ \Im z>0,
\]
where $F(z)$ is analytic, $\Re F(z) \in (0,\pi)$ and
\[ 
F(z) = i \Im F(i) + \int_\R H(z,u:i) \phi(u) du,
\]
and $\phi \in L^\infty(\R), \ 0 \leq \phi \leq \pi.$ In conclusion, we obtain the representation
\[
 \Phi(z) = e^{\Im F(i)} \exp \int_\R \left(\frac{1}{u-z} - \frac{u}{u^2+1}\right)\frac{\phi(u)}{\pi} du,\]
a formula already invoked in (\ref{partial-exp}).

\subsubsection{The $l_2$ ball}\label{sec:ball}  Let $\mu$ be a positive Borel measure supported by the closed unit ball $\mathbf b$ of ${\R}^d$, and
denote by
$$ a_\alpha (\mu) = \int_{\mathbf b} x^\alpha d\mu(x), \ \ \ \alpha \in {\N}^d.$$
We consider a version of the Fantappi\`e transform of $\mu$:
$$ {\fant}(\mu) (z) = \int_{\mathbf b}  \frac{d\mu(x)}{1- x \cdot z},$$
where $u \cdot v = u_1 v_1 + ...+ u_d v_d.$

Note that $ {\fant}(\mu)$ is an analytic functions defined in the open unit ball $\mathbf B$ of ${\mathbf C}^d$.
Its Taylor series expansion at $z=0$ is reducible, modulo universal constants, to the moments of $\mu$:
$$ {\fant}(\mu) (z) = \sum_\alpha \frac{\abs{\alpha}!}{\alpha !} a_\alpha (\mu) z^\alpha.$$

Remark also that, for all $z \in  {\mathbf B}$:
$$ \Re {\fant}(\mu) (z) = \int_{\mathbf b} \frac{1 - \Re x \cdot z}{\abs{1-x\cdot z}^2} d\mu (x) \geq 0.$$
By the maximum principle for pluri-harmonic functions, equality sign can happen only if ${\fant}(\mu) (z)$
is identically equal to a purely imaginary constant, which is impossible, since ${\fant}(\mu) (0) = \mu({\mathbf b})$.

Define the function 
\[F(z) \triangleq -i \ln i \Phi(\mu)(z)\] 
on the unit ball, such that $i \fant (\mu)(z) = \exp i F(z)$, with $\Re F(z) \in [0,\pi]$ for $z \in B$. The function $F(z)$ is analytic in the ball, and has a positive, bounded real part there. By the generalized Riesz-Herglotz formula, see \cite{koranyi1965,mccarthy2005}, we infer:
\[F(z) = i \Im F(0) + \int_{\partial {\mathbf B}} [2 S(z,w) -1] \phi(w) d\sigma(w),\]
where $\partial {\mathbf B}$ is the unit sphere, $\sigma(w)$ is the surface element on $\partial {\mathbf B}$, normalized to have mass
equal to one, 
$$ S(z,w) = \frac{1}{(1- z \cdot \overline{w})^d}$$
is Szeg\"o's kernel of the ball, and $\phi(w)$ is a measurable function on the sphere, satisfying
$$ 0 \leq \phi(w) \leq \pi, \quad w \in \partial {\mathbf B}.$$

Let us deal first with the free term, similarly as we did in the one-dimensional case:
$$ {\fant}(\mu) (0) = \mu({\mathbf b}),$$
and therefore
\[ F(0) = -i \ln i \mu(\mathbf{b}) = \underbrace{- i\ln \mu(\mathbf{b})}_{i \Im F(0)} + \frac{\pi}{2}. \]
The total mass of $\phi(w)$ is now easily computed by setting $z = 0$ in the Riesz-Herglotz formula, to obtain
\[
-i \ln i \mu(\mathbf{b}) + \frac{\pi}{2} = -i \ln i \mu(\mathbf{b}) + \int_{\partial \mathbf B} \phi(w) d\sigma(w),
\]
or 
\[
\int_{\partial \mathbf B} \phi(w) d\sigma(w) = \frac{\pi}{2}.
\]

The Fantappi\'e transform of the measure $\mu$ simplified by substituting the exact values for the free term and the total mass of the phase function $\phi$:
\begin{align*}
  i\fant (\mu)(z) &= \exp i F(z) \\
  &= \exp\left[ \ln \mu(\mathbf{b})  + i\int_{\partial {\mathbf B}} [2 S(z,w) -1] \phi(w) d\sigma(w)\right] \\
  \fant (\mu)(z)&= \mu( \mathbf{b} ) \exp \left\{ i \int_{\partial {\mathbf B}} [2S(z,w) - 1] \phi(w) d\sigma(w)  - i\frac{\pi}{2}\right\}\\
   &= \mu(\mathbf{b}) \exp \left\{2i \int_{\partial {\mathbf B}} [S(z,w) - 1]\phi(w) d\sigma(w)\right\}
\end{align*}
This formula relates, via a triangular, non-linear transformation, the moments $(a_\mu (\alpha))$ of
$\mu$ to the moments $(a_\phi (\alpha))$ of the density $\phi$. Additionally, note that the Koranyi-Puk\'anszky theorem asserts that all the multivariate moments of $\phi$ at mixed-sign indices are zero, i.e., for a multi-index $\alpha$, $a_\phi(\alpha) = 0$, except possibly when $\forall i, \alpha_i \geq 0$, or $\forall i, \alpha_i \leq 0$.

\subsubsection{The $l_1$ ball}\label{sec:polydisk} Let $\Delta = \{ x \in {\R}^d; \ \abs{x_1} + ...+\abs{x_d} \leq 1 \}$ and consider a positive measure
$\mu$ supported by $\Delta$. Its Fantappi\`e transform is analytic in the unit polydisk ${\disk}^d$, and has there
a power series expansion
$$ {\fant}(\mu) (z) = \sum_\alpha \frac{\abs{\alpha}!}{\alpha !} a_\mu (\alpha) z^\alpha.$$
Arguing as in the case of the ball, the function $\ln {\fant}(\mu)$ is analytic in the open polydisk, and has
non-negative imaginary part there bounded above by $\pi$. The analog of Riesz-Herglotz formula (as derived for the first time
by Koranyi and Puk\'anszky \cite{koranyi1963}) yields a measurable function $\phi$ defined on the unit torus ${\T}^d$, with values
in the interval $[0,\pi]$, so that:
$$ {\fant}(\mu) (z) = \mu(\Delta) \exp \left\{ 2i \int_{{\T}^d}  [ \Pi(z,w) -1] \phi(w) d\theta(w)\right\},$$
where this time
$$ \Pi(z,w) = \prod_{j=1}^d \frac{1}{1- z_j \overline{w}_j},$$
and
$$ d\theta(w) = \prod_{j=1}^d \frac{d w_j}{2 \pi i w_j}.$$

Let $a_\phi(\alpha) = \int_{{\T}^d} \phi(w) \overline{w}^\alpha d\theta(w)$ denote the 
Fourier coefficients of the function $\phi$ (i.e. its trigonometric moments on the torus).

At the level of generating series we obtain the following transform
$$ \sum_\alpha \frac{\abs{\alpha}!}{\alpha !} a_\mu (\alpha) z^\alpha = \mu(\Delta) \exp [ 2i \sum_{\alpha \neq 0} a_\phi(\alpha) z^\alpha]. $$ Consider a simple example.

\begin{example}
  To verify that the above formulas are correct, we consider the 1D
  case, with the Dirac measure $d\mu = c \delta_a$, where $c>0, a \in
  [-1,1]$. The Fantappi\`e transform is the analytic function in the
  unit disk:
$$ f(z) = \frac{c}{1-az}.$$ Since $f(z)$ has positive real part, $if(z)$ has positive imaginary part in the disk.
Whence $\ln if(z)$ is well defined, analytic, and has imaginary part
in the interval $[0,\pi]$, thus the classical Riesz-Herglotz formula
is applicable to the function $-i\ln i f(z)$:
$$ -i \ln i f(z) = i \Im [-i \ln i f(0)] + \int_{\T} \frac{1+ z \overline{w}}{1-z\overline{w}} \phi(w) \frac{ dw}{ 2 \pi i w}.$$
Above, $\phi(w) = \Re (-i \ln i f(w)) $ is a measurable function on
the torus with values in $[0,\pi]$. Note that $f(0) = c$, so that $
\ln i f(0) = \ln c + i \pi/2$, and $i \Im [-i \ln i f(0)] = -i \ln
c.$ By evaluating $z=0$ in the formula we obtain
$$ -i \ln c + \pi/2 = -i \ln c + \int_{\T}  \phi(w) \frac{ dw}{ 2 \pi i w},$$ that is 
$$ \pi/2 = \int_{\T}  \phi(w) \frac{ dw}{ 2 \pi i w}.$$ Finally we get
\begin{align*}
  \ln i + \ln f(z)
  &= \ln c + i \int_{\T} \frac{1+ z \overline{w}}{1-z\overline{w}} \phi(w) \frac{ dw}{ 2 \pi i w}\\
  \ln f(z)  &= \ln c + i \int_{\T} \left[\frac{1+ z \overline{w}}{1-z\overline{w}}-1\right] \phi(w) \frac{ dw}{ 2 \pi i w} \\
  &= \ln c + 2i \int_{\T} \frac{z \overline{w}}{1-z\overline{w}}
  \phi(w) \frac{ dw}{ 2 \pi i w} \\
  &= \ln c + 2i \int_{\T}
  \left[\frac{1}{1-z\overline{w}}-1\right] \phi(w) \frac{ dw}{ 2 \pi i w},
\end{align*}
which
is consistent with our general formulas.

One step further, we can easily compute the positive ($n>0$) Fourier
coefficients of $\phi(w) = \Re (-i \ln i f(w))$:
$$ \int_{\T} \phi(w) \overline{w}^n  \frac{ dw}{ 2 \pi i w} = \int_{\T} \frac{-i}{2} \ln i f(w) \overline{w}^n  \frac{ dw}{ 2 \pi i w} = $$ $$ 
\frac{1}{2i} \int_{\T} \ln \frac{c}{1-aw} \overline{w}^n \frac{ dw}{
  2 \pi i w} = \frac{1}{2i} \frac{ a^n}{n}.$$

The final verification:
$$ \frac{c}{1-az} = c \exp \sum_{n=1}^\infty \frac{a^n}{n} z^n.$$
\end{example}

\subsection{Partial Fantappi\`e transform}\label{sec:partial-fantappie} 
In this section we continue analytically the Fantappi\`e transform of a measure
supported by a convex cone in a single direction, treating the rest of the variables as parameters, with a double benefit:
a simple and well known formulas in 1D, and a tight control of the growth of the phase function. We closely follow below the
article \cite{arons1956}, although similar computations have appeared much earlier in the work of Nevanlinna and Verblunsky.

\subsubsection{Regularization by parametrized single-variable transforms}

The setting is the same: $\Gamma \subset \R^d$ is a closed, solid, acute convex cone and $\mu$ is a finite positive measure supported by
its polar cone $\Gamma^\ast$. We consider the analytic extension of the Fantappi\`e transform:
\[
\Phi(-z,y) = \int_{\Gamma^\ast} \frac{d\mu(x)}{-z+ x \cdot y}, \ \  y \in \Gamma, \ \Im z >0.
\]
Since $$ \frac{1}{-\overline{z}+ x \cdot y} -\frac{1}{-z+ x \cdot y} = \frac{\overline{z} - z}{\abs{z-x\cdot y}^2}$$
$$ (y \in \Gamma, \ \Im z >0)\ \ \Rightarrow \ \   \Im \Phi(-z,y) >0.$$

Thus, for any fixed $y \in \Gamma$, the function $z \mapsto i\Phi (-z,y)$ preserves the upper half-plane
and hence it can be represented for $\Im w >0$ as
\begin{equation}\label{partial-exp}
\int_{\Gamma^\ast} \frac{d\mu(x)}{-z+x\cdot y} =  \Phi(-z,y) = C(y) \exp \int_\R \left(\frac{1}{t-z} - \frac{t}{1+t^2}\right) \phi_y (t) dt, 
\end{equation}
where $C(y) >0$ and $0 \leq \phi_y(t) \leq 1$ both depending measurably on $y$, respectively $y$ and $t$,
see \cite{arons1956}. Both functions $C(y), \phi_y(t)$ are uniquely determined by $\Phi(z,y)$, hence
by the measure $\mu$. For illustration, we provide a simple example of a point mass:

\begin{example}
  Take $\mu = c \delta_0$, where
  $c>0$. First we obtain by direct integration
$$ \frac{-1}{z} = \exp  \int_0^\infty (\frac{1}{t-z} - \frac{t}{1+t^2}) dt,$$
hence
$$ \int_{\Gamma^\ast} \frac{c  \delta_0 (x)}{-z+ix\cdot y} = c \exp  \int_0^\infty (\frac{1}{t-z} - \frac{t}{1+t^2}) dt,$$
obtaining
$$ C(y) = c, \ \phi_y(t) = \chi_{[0,\infty)}(t),  \ \  y \in \Gamma.$$
It is rather annoying that for such a simple measure as a point mass,
the phase function has an unbounded support.  Fortunately, there is a
simple remedy, derived from an observation of Verblunsky
\cite{verblunsky1947,delange1950}.
\end{example}

\begin{thm} Let $\mu$ be a finite positive measure supported on the cone  $\Gamma^\ast$. For every $y \in \Gamma$
there exists a phase function $\xi_y \in L^1([0,\infty), dt), \ 0 \leq \xi_y \leq 1,$ measurably depending on $y$, 
such that
\begin{equation}\label{verblunsky}
1 + \int_\Gamma \frac{d\mu(x)}{x \cdot y - z} = \exp \int_0^\infty \frac{\xi_y(t) dt}{t-z},\ \ \Im z>0.
\end{equation}
Moreover, if $\int_{\Gamma^\ast} \abs{x}^n d\mu(x)<\infty$ for some $n \in \N$, then $\int_0^\infty t^n \xi_y(t) dt <\infty$
for all $y \in \Gamma$.
\end{thm}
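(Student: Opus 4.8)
The plan is to regard, for each fixed $y \in \Gamma$, the function $g(z) := 1 + \Phi(-z,y)$ as a Nevanlinna function of $z$ in the upper half-plane and to read off its exponential representation. Exactly as in the derivation of \eqref{partial-exp}, one has $\Im\Phi(-z,y) > 0$ whenever $\Im z > 0$, so $g$ maps the upper half-plane into itself; in particular $g$ never vanishes there, and $\ln g$ admits a single-valued branch with $\Im\ln g(z) \in (0,\pi)$. By the half-plane version of the Riesz--Herglotz representation for analytic functions with bounded positive imaginary part (the line analogue of Theorem~\ref{thm:RieszHerglotz}, already used implicitly in Section~\ref{sec:1dunbounded}), there exist a real constant $c_y$ and a density $\psi_y$ with $0 \le \psi_y \le 1$ a.e.\ such that
\[
\ln g(z) = c_y + \int_{\R}\left(\frac{1}{t-z} - \frac{t}{1+t^2}\right)\psi_y(t)\,dt, \qquad \Im z > 0 .
\]

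Next I would localize the support of $\psi_y$. Since $x\cdot y \ge 0$ for $x \in \Gamma^\ast$, $y \in \Gamma$, the integral defining $\Phi(-z,y)$ continues analytically across the ray $(-\infty,0)$, on which $g$ takes real values $\ge 1$; hence $\ln g$ is real-analytic there, its boundary imaginary part vanishes, and therefore $\psi_y = 0$ on $(-\infty,0)$. The measurable dependence on $y$ is then automatic: one has $\psi_y(t) = \pi^{-1}\lim_{\varepsilon\downarrow 0}\Im\ln g(t+i\varepsilon)$ for a.e.\ $t$, and $g$ depends jointly continuously on $(y,z)$, while uniqueness of the Herglotz measure pins $\psi_y$ down (compare \cite{arons1956}).

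The crux --- and the reason for replacing $\Phi$ by $1+\Phi$, following Verblunsky \cite{verblunsky1947,delange1950} --- is to prove $\psi_y \in L^1(0,\infty)$ and to remove the counterterm $t/(1+t^2)$. For this I would exploit the decay along the imaginary axis: multiplying $g(i\tau)-1 = \int_{\Gamma^\ast} d\mu(x)/(x\cdot y - i\tau)$ by $i\tau$ and applying dominated convergence gives $i\tau\,(g(i\tau)-1)\to -\mu(\Gamma^\ast)$, hence $g(i\tau)-1 \sim i\,\mu(\Gamma^\ast)/\tau$ and $\tau\,\Im\ln g(i\tau)\to\mu(\Gamma^\ast)$. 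On the other hand, the representation above gives $\tau\,\Im\ln g(i\tau) = \int_0^\infty \frac{\tau^2}{t^2+\tau^2}\,\psi_y(t)\,dt$, and since $\tau^2/(t^2+\tau^2)\uparrow 1$, monotone convergence yields $\int_0^\infty\psi_y(t)\,dt = \mu(\Gamma^\ast) < \infty$. With $\psi_y \in L^1$, the integral $\int_0^\infty\psi_y(t)/(t-z)\,dt$ converges absolutely for $\Im z>0$, so I can rewrite
\[
\ln g(z) = c_y' + \int_0^\infty\frac{\psi_y(t)}{t-z}\,dt, \qquad c_y' := c_y - \int_0^\infty\frac{t\,\psi_y(t)}{1+t^2}\,dt \in \R ,
\]
and letting $z=i\tau\to i\infty$, where $\ln g(i\tau)\to 0$, forces $c_y'=0$. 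Setting $\xi_y := \psi_y$ gives the claimed identity, together with the bonus identity $\int_0^\infty\xi_y(t)\,dt = \mu(\Gamma^\ast)$.

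Finally, for the moment assertion I would iterate this last device. If $\int_{\Gamma^\ast}\abs{x}^n\,d\mu<\infty$ then the pushforward of $\mu$ under $x\mapsto x\cdot y$ has finite moments of order $\le n$, so $g(i\tau)$, and hence $\ln g(i\tau)$, has an asymptotic expansion in powers of $1/\tau$ to order $n+1$ whose coefficients are real and are polynomial expressions in the moments of $\mu$ (the leading one being $\mu(\Gamma^\ast)$). Matching this expansion one term at a time against that of $\int_0^\infty\xi_y(t)/(t-z)\,dt$ --- after subtracting the already-identified lower-order terms and applying the same monotone-convergence argument to $\Im$ at $z=i\tau$ --- one extracts successively that $\int_0^\infty t^j\xi_y(t)\,dt$ is finite for $j=0,1,\dots,n$. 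The main obstacle throughout is precisely this $L^1$ bound for $\xi_y$ and its higher-moment refinements: it is not a formal consequence of $0\le\psi_y\le1$, but genuinely requires the sharp first-order decay $g(i\tau)-1\sim i\,\mu(\Gamma^\ast)/\tau$, which is exactly what the normalization ``$1+\Phi$'' buys us over the bare transform $\Phi$ of \eqref{partial-exp}.
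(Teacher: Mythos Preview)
Your argument is correct and is, in essence, a self-contained proof of the one-dimensional Verblunsky theorem that the paper simply quotes. The paper's own proof is much shorter: it pushes $\mu$ forward along $x\mapsto x\cdot y$ to a measure $\mu_y$ on $[0,\infty)$, rewrites $\Phi(-z,y)=\int_0^\infty (t-z)^{-1}d\mu_y(t)$, and then invokes Verblunsky's theorem and Theorem~A.b) of \cite{arons1956} as black boxes to obtain $\xi_y$, its $L^1$ bound, and the finiteness of higher moments; measurability in $y$ is handled by the same boundary-limit remark you make.

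What you do differently is unpack those black boxes. You derive the Herglotz representation of $\ln(1+\Phi(-z,y))$, localize the support of the phase to $[0,\infty)$ via analytic continuation across the negative axis, and then prove $\psi_y\in L^1$ by the monotone-convergence trick $\tau\,\Im\ln g(i\tau)=\int_0^\infty \tau^2(t^2+\tau^2)^{-1}\psi_y(t)\,dt\to\int_0^\infty\psi_y$, matched against the asymptotic $g(i\tau)-1\sim i\mu(\Gamma^\ast)/\tau$; this is exactly the mechanism behind Verblunsky's result and also yields the identity $\int_0^\infty\xi_y=\mu(\Gamma^\ast)$ that the paper records separately after the theorem. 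Your treatment of the higher moments by iterating this asymptotic matching is only a sketch, but so is the paper's (it just cites \cite{arons1956}). The payoff of your route is that it makes transparent \emph{why} the ``$1+$'' shift is essential for integrability of the phase, whereas the paper's route is shorter and keeps the focus on the multivariate reduction.
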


\begin{proof} Fix a point $y \in \Gamma$ and denote by $\mu_y$ the push forward of the measure
$\mu$ via the map $x \mapsto x \cdot y.$ Specifically, for a test function $f \in C_0([0,\infty))$:
\begin{align}
  \int_{\Gamma^\ast} f(x\cdot y) d\mu(x) = \int_0^\infty f(t)
  d\mu_y(t).\label{eq:pushforward}
\end{align}
In these terms, Fantappi\`e's transform
of the measure $\mu$ becomes 
$$\Phi(z,y) = \int_0^\infty \frac{d\mu_y(t)}{z+t}.$$ According to Verblunsky's theorem \cite{arons1956}, there exists a
function  $\xi_y \in L^1([0,\infty), dt), \ 0 \leq \xi_y \leq 1$ with the property
$$1 + \Phi(-z,y) = \exp \int_0^\infty \frac{\xi_y(t) dt}{t-z},\ \ \Im z>0.$$ The boundary limit operations
producing $\xi_y$ from $\mu_y$ imply that the dependence $y \mapsto \xi_y$ is (weakly)
measurable, as a map from $\Gamma$ to $L^1([0,\infty), dt)$.
Finally, Theorem A.b) of \cite{arons1956} implies the finiteness of the first $n+1$ moments of every $\xi_y$,
provided that the moments of $\mu_y$ of the same order are finite.
\end{proof}

Returning to our simple example, $\mu = c\delta_0$, we find this time
$$ 1+\Phi(-z,y) = 1 + \int_{\Gamma^\ast} \frac{c \delta_0(x)}{x \cdot y - z} = 1 - \frac{c}{z} = \exp \int_0^c \frac{dt}{t-z},$$
whence
$$ \xi_y = \chi_{[0,c]}, \ \ y \in \Gamma.$$

Assume that $\int_{\Gamma^\ast} \abs{x}^n d\mu(x)<\infty$ for a positive value of $n$ and denote the initial moments of $\mu$ as:
$$ \gamma_\alpha = \int_{\Gamma^\ast} x^\alpha d\mu(x), \ \ \abs{\alpha} \leq n.$$ Similarly, denote
$$ c(y)_j = \int_0^\infty t^j \xi_y(t) dt, \ \ 0 \leq j \leq n.$$

Then one can identify asymptotically the 
series expansion (in a wedge with vertex at $z=0$) of the two terms in (\ref{verblunsky}), obtaining the algebraic relation
\begin{equation}\label{Markov-recurrence}
 1 - \sum_{k=0}^n \frac{k!}{z^{k+1}}( \sum_{\abs{\alpha}=k} \frac{y^\alpha}{\alpha!} \gamma_\alpha) + O(z^{-n-2}) = 
\exp ( - \sum_{j=0}^n \frac{c_j(y)}{z^{j+1}}).
\end{equation}
 For details see again \cite{arons1956}.
In particular, by equating the coefficients of $z^{-1}$ one finds Verblunsky's identity
$$ c_0(y) = \gamma_0, \ \ y \in \Gamma.$$

Note that, after expanding the exponential series, the moment $c_j(y)$ is given by a universal 
polynomial function in the variables $\gamma_\alpha, \  \abs{\alpha} \leq j.$ It is exactly this system of 
polynomial equation which was discovered and exploited by Markov (in dimension one).

\subsubsection{Inversion through Radon transform}

Using entropy optimization, for each $p \in \Gamma$, we obtain functions $\xi^\ast_p$ that approximate Aronszajn-Donoghue phase functions $\xi_p$  of push forward measures $\mu_p$ defined by \eqref{eq:pushforward}. The inversion procedure given in Section \ref{sec:oned} would be able to recover $\mu^*_p$ measures, however, piecing together approximation $\mu^\ast$ from ``slices'' $\mu^\ast_p$ would be a challenging task. Instead, we seek to recover the Radon transform $\radon \mu^\ast (p, t)$ of approximation $\mu$, which is then inverted by one of the standard algorithms for inverse Radon transform.

We start by relating the partial Fantappi\`e transform to Cauchy transform
\begin{align*}
  \fant \mu(p, -z) = \int_{\Gamma^\ast} \frac{d\mu(x)}{-z + p\cdot x} = \cauchy \mu_p(z) 
  = \exp \cauchy \xi_p(z) - 1,
\end{align*}
with a slight abuse of notation for the Cauchy transform, see \eqref{verblunsky}. To avoid the singular integral in passing from complex to the real Fantappi\`e directly, we sum the Plemelj-Sokhotski formulas \eqref{eq:ps-realline} to obtain
\begin{align*}
  \fant \mu(p, p_0) = \cauchy \mu_p(-p_0) &= \lim_{\epsilon \downarrow 0} \frac{1}{2}\left[ \cauchy \mu_p(-p_0 - i\epsilon) + \cauchy \mu_p(-p_0 + i\epsilon)\right] \\
  &= \exp\left[ -\pi \hilb \xi_p(-p_0)\right] \cos \left[ \pi \xi_p(-p_0) \right] - 1,
\end{align*}
using the derivation analogous to derivation of \eqref{eq:onedinverse}, where we defined function $f_p$ just to relieve notation for the rest of the procedure.

To connect the Fantappi\`e transform to the Radon transform of a measure, we again follow Henkin and Shananin \cite{Henkin1990}. For a positive measure $\mu$ with density $\xi$ supported in the positive orthant $\R_+^n$, define the Radon transform as
\begin{align}
  \label{eq:radon}
  \radon \mu (\theta, s) &\triangleq \int_{H(\theta, s)} \xi(x)d\sigma(x),  \\
&= \int_{\R^n} \delta(s - x \cdot \theta) \xi(x) d\sigma(x),
\end{align}
where $H(\theta,s) \triangleq \{ x \in \R^n : x \cdot \theta = s \}$ is the integration hyperplane and $d\sigma(x)$ its surface area measure.
Henkin and Shananin give the following relation between Fantappi\`e and Radon transforms of rapidly decaying measure $\mu$: 
\begin{align}
  \fant \mu(p,p_0) = \int_0^\infty \frac{\radon\mu(p,\tau)}{\tau +
    p_0} d\tau,\label{eq:hs-radon-fantappie}
\end{align}
valid for $p_0 > 0$, $p \in \R^n_+$. This expression can be interpreted as a composition of Hilbert and Radon transforms
\[
\fant \mu(p,p_0) = [ \hilb \radon \mu(p,\cdot) ](-p_0),
\]
where Hilbert transform is taken along the offset parameter. As $-\hilb^2$ is the identity operator, by applying another Hilbert transform, we can evaluate the Radon transform as
\[
\radon \mu(p,p_0) = -\frac{1}{\pi} [\hilb f_p](p_0),
\]
where we define
\[
f_p(p_0) \triangleq \exp\left[ -\pi \hilb \xi_p(p_0)\right] \cos \left[ \pi \xi_p(p_0) \right] - 1.
\]
As mentioned before, Hilbert transform can be efficiently evaluated using FFT. Therefore, for each selected $p$, we can evaluate the Radon transform along $p_0$ axis. One might validly ask why we decided to go through this labyrinthine process only to evaluate a Radon transform of $\mu$. The answer lies in the method used to obtain moment data. If the original measure $\mu$ has a density fully accessible for arbitrary Radon-type measurements, as it is in medical tomography, then the entire procedure is superfluous as we can access $\radon_\mu(p)$ directly at any $p \in \Gamma$. However, for singular measures and in some settings, e.g., invariant measures on attractors of dynamical systems, Radon-measurements are not directly possible and the described procedure becomes an acceptable path to reconstruction.

An unfortunate obstacle prevents us for completing the process by invoking a readily-available inversion algorithm for the Radon transform. The relation between the Fantappi\`e and Radon transform \eqref{eq:hs-radon-fantappie} holds only in the positive orthant $(p_0,p) \in \R^{n+1}_+$, which is, in general, not enough for a numerically-stable reconstruction \cite{natte2001}. It is possible that this obstacle can be removed by considering certain symmetries of the problem. However, these considerations would lead us too far from the central theme of this paper and we plan to explore them in a subsequent paper.

\section{Conclusions}
\label{sec:conclusions}

In this paper, we presented an approach aimed at representing singular measures using bounded densities. Our goal was to use existing entropy optimization methods to solve truncated moment problems for singular measures. Previously, the entropy optimization could not be applied to singular measures due to lack of convergence in the optimization procedure

The paper adds two steps that bookend the entropy optimization. The regularization step uses a triangular, recursive transformation on the input moment set to produce the conditioned moment set which is a feasible input for entropy optimization. The inversion step uses the density that solves the entropy optimization constrained by conditioned moments, and recovers an approximation to the original measure.

We presented both steps in detail for the support in a one-dimensional space. Two different settings, unbounded and bounded supports, were analyzed, resulting in, respectively, a more general formulation using power moments, and a numerically favorable formulation using trigonometric moments.

We generalize the regularization step to multivariate domains, in particular supports of measures in wedges and particular compact domains in $\R^d$. The inversion, however, proved to be a technically more demanding task, with the direct generalization of the Plemelj-Sokhotski formulas requiring a detailed application of theory of Clifford algebras. We did not tackle such generalization in this paper, nevertheless, we presented an outline of a simpler tomographic process. It would reduce a multivariate inversion problem to a family of inversion along rays in the domain, which can be solved using the one-dimensional method. The information based on ray transforms could then be integrated into the approximation of the original measure using methods of tomography.

This paper is the first part of this research effort. We plan to explore the full solution to the multivariate inversion step using tomography in one of the follow-up papers. Furthermore, a future paper should formulate error analysis and present numerical confirmation of the usefulness of our method, applied to concrete examples.

\appendix

\section{Miller-Nakos algorithm for exponentiation of a power series}
\label{sec:miller}

Computing coefficients of an exponentiated (formal) power series can be performed recursively, i.e., from knowledge of the coefficients of the base and coefficients of the lower coefficients. The original algorithm in a single variable is given by Peter Henrici who attributes it to J.C.P.\ Miller \cite{henrici1956}. The extension to the multivariate case is due to George Nakos \cite{nakos1993}, which is unfortunately published in a journal that is not easily accessible. Therefore, we give the algorithm in its entirety here, stressing that all the original ideas were present in the above papers. Ours are the choice of notation and the (trivial) extension to the case of series with zero free terms, mentioned at the end of the section.

Let $\alpha, \beta, \mu, \gamma \in \N^d_0$ denote multi-indices and let the basis multi-index be $\epsilon_i = (0,0,\dots,0,1,0,\dots,0)$ where $1$ is at the $i$-th position. Unless noted otherwise, sums over multi-indices range over all multi-indices $\N_0^d$. Furthermore, denote $\partial_i = \frac{\partial}{\partial x_i}$. 
\begin{thm}[J.C.P.\ Miller, G.\ Nakos]
Let $A(x)$ and $B(x)$ be multivariate power series with non-zero free terms, given by expressions 
\begin{align*}
  A(x) \triangleq \sum_{\alpha} a_\alpha x^\alpha &&  B(x) \triangleq \sum_{\beta} b_\beta x^\beta,
\end{align*}
with $\alpha_0 \not = 0$, $\beta_0 \not = 0$.

If the power series are related by equation
\[
B(x) = A(x)^k,
\]
then coefficients $b_\beta$ can be computed recursively by expressions
\begin{equation}
\begin{aligned}
  b_0 &= a_0^k \\
  b_\mu &= \sum_{0 < \gamma \leq \mu} \frac{1}{a_0}\left[ (k+1) \frac{\abs{\gamma/\mu}}{\abs{\mu/\mu}} - 1\right] a_\gamma b_{\mu - \gamma},
\end{aligned}\label{eq:nakos}
\end{equation}
where \[\abs{\alpha / \beta} = \sum_{ i : \beta_i \not = 0} \alpha_i / \beta_i.\footnote{Particularly, $\abs{\alpha / \alpha}$ counts the nonzero elements in $\alpha$.}\]
\end{thm}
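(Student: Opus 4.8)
The plan is to derive the recursion by the classical logarithmic-derivative trick of Miller, carried out one coordinate direction at a time and then assembled. First I would reduce to the case $a_0 = 1$: since $A$ has non-zero free term, write $A = a_0 \tilde A$ with $\tilde A(0)=1$, so that $B = a_0^k \tilde A^k$; the free-term formula $b_0 = a_0^k$ drops out immediately and it suffices to establish the recursion for $\tilde A$, i.e.\ we may assume $a_0 = 1$. The engine is the identity $\partial_i B = k A^{k-1}\partial_i A$, which after multiplying by $A$ becomes the differential relation $A\,\partial_i B = k B\,\partial_i A$, valid coordinatewise and free of the unknown exponent inside a power. This is the multivariate analogue of Henrici's observation and is where the hypothesis $\beta_0\neq 0$ (really $a_0\neq 0$, needed to invert $A$) is used.

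Next I would extract the coefficient of $x^{\mu-\epsilon_i}$ from $A\,\partial_i B = k B\,\partial_i A$ for a fixed coordinate $i$ with $\mu_i \neq 0$. Using $[\partial_i B]_{\nu} = (\nu_i+1) b_{\nu+\epsilon_i}$ and the Cauchy product, the left side contributes $\sum_{\gamma\le\mu} a_\gamma (\mu_i-\gamma_i) b_{\mu-\gamma}$ and the right side $k\sum_{\gamma\le\mu} \gamma_i a_\gamma b_{\mu-\gamma}$; isolating the $\gamma = 0$ term on the left (which carries $a_0=1$ and the full $\mu_i b_\mu$) gives
\[
\mu_i\, b_\mu = \sum_{0<\gamma\le\mu} \bigl[(k+1)\gamma_i - \mu_i\bigr] a_\gamma b_{\mu-\gamma}.
\]
Dividing by $\mu_i$ yields one scalar relation per active coordinate. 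The final step is to symmetrize: sum the $d$ relations obtained from all $i$ with $\mu_i\neq 0$, divide by the number of such coordinates, and recognize that $\frac{1}{\#\{i:\mu_i\neq0\}}\sum_{i:\mu_i\neq0} \frac{\gamma_i}{\mu_i} = \frac{|\gamma/\mu|}{|\mu/\mu|}$, which converts the bracket into $(k+1)\frac{|\gamma/\mu|}{|\mu/\mu|} - 1$ and reproduces \eqref{eq:nakos}. Restoring the factor $a_0^k$ and an overall $1/a_0$ from the reduction step gives the stated formula verbatim.

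The routine parts are the coefficient bookkeeping in the Cauchy products and the index shifts $\partial_i$; the one genuine point requiring care — the ``main obstacle'' — is the averaging step: a single coordinate direction already determines $b_\mu$, so one must check that all $d$ per-coordinate recursions are mutually consistent (they are, being consequences of the same differential identities together with the already-known lower coefficients) and that their average collapses exactly to the compact ratio $|\gamma/\mu|/|\mu/\mu|$ rather than to some coordinate-dependent expression. I would also remark, as the paper does, that the extension to series with vanishing free term is trivial: factor out the lowest monomial, apply the above, and track the homogeneity shift. Carleman-type convergence is not at issue here since the statement is purely formal (an identity of formal power series), so no analytic estimates enter.
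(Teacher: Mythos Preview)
Your proposal is correct and follows essentially the same route as the paper: the identity $A\,\partial_i B = kB\,\partial_i A$, coefficient extraction at $x^{\mu-\epsilon_i}$, isolation of the $\gamma=0$ term, and averaging over the coordinates with $\mu_i\neq 0$ are exactly the paper's steps (the paper keeps $a_0$ throughout rather than normalizing first, but this is purely cosmetic). One small correction to your closing aside: in several variables a series with vanishing free term need not have a common monomial factor (e.g.\ $A=x_1+x_2$), so ``factor out the lowest monomial'' does not work; the paper instead handles this case via the binomial expansion $[(A+1)-1]^n=\sum_{k}\binom{n}{k}(-1)^{n-k}(A+1)^k$, applying the recursion to $A+1$.
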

\begin{proof}

The proof of the algorithm rests on the identity
\begin{align}
  \label{eq:millerid}
  A(x) \partial_i B(x) = k B(x) \partial_i A(x),
\end{align}
derived by expanding $\partial_i B(x)$ by chain rule and multiplying both sides by $A(x)$. Series expansions for derivatives $\partial_i A(x)$, and analogously for $\partial_i B(x)$, is
\[
 \partial_i A(x) = \sum_{\alpha} a_\alpha \alpha_i x^{\alpha - \epsilon_i}.
\]
  Expanding \eqref{eq:millerid} into series we obtain 
\[
\left(\sum_\alpha a_\alpha x^\alpha\right)  \left(\sum_\beta b_\beta \beta_i x^{\beta - \epsilon_i} \right)= 
k  \left(\sum_\beta b_\beta x^\beta\right)  \left(\sum_\alpha a_\alpha \alpha_i x^{\alpha - \epsilon_i}\right).
\]

Introduce following change of indices: on the left hand side over $\beta$, $\beta - \epsilon_i \mapsto \beta$, $\beta \mapsto \beta + \epsilon_i$, $\beta_i \mapsto \beta_i+1$, and the analogous substitutions on the right hand side in sum over $\alpha$, to obtain the identity
\[
\sum_\alpha a_\alpha x^\alpha \sum_\beta (\beta_i + 1) b_{\beta+\epsilon_i} x^\beta = 
k \sum_\beta b_\beta x^\beta \sum_\alpha (\alpha_i+1) a_{\alpha + \epsilon_i} x^\alpha.
\]

At this point, the goal is to compute coefficient $b_\mu$ from knowledge of $a_\alpha$ and $b_\gamma$ for $\gamma < \mu$. The products of power series $\sum_\alpha a_\alpha x^\alpha \sum_\beta b_\beta x^\beta = \sum_\gamma c_\gamma x^\gamma$ are computed using convolution over coefficients $c_\gamma \triangleq \sum_{\alpha + \beta = \gamma} a_\alpha b_\beta$.
 The coefficients at index $\mu - \epsilon_i$ on both sides of the above identity have to match, yielding coefficient identity
\[
\sum_{\alpha + \beta = \mu - \epsilon_i} a_\alpha b_{\beta + \epsilon_i} (\beta_i + 1) = k \sum_{\alpha + \beta = \mu-\epsilon_i} b_\beta a_{\alpha+\epsilon_i}(\alpha_i + 1).
\]
The expressions can be simplified by another re-indexing, taking into account constraints on indices in summation, and symmetry in summation: on LHS, $\alpha \mapsto \gamma$, $\epsilon_i + \beta \mapsto \mu - \gamma$, on RHS $\beta \mapsto \mu - \gamma$, $\epsilon_i + \alpha \mapsto \gamma$, to obtain
\[
\sum_{0 \leq \gamma \leq mu} a_\gamma b_{\mu - \gamma} (\mu_i - \gamma_i) = k \sum_{0 \leq \gamma \leq \mu} b_{\mu - \gamma} a_\gamma \gamma_i,
\]
or
\[
\sum_{0 \leq \gamma \leq \mu} a_\gamma b_{\mu-\gamma} \left[ \mu_i - (k+1) \gamma_i \right] = 0.
\]
To solve for $b_\mu$, extract $\gamma = 0$ case from the sum to obtain
\begin{align*}
  a_0 b_\mu \mu_i &= \sum_{0 < \gamma \leq \mu} a_\gamma b_{\mu - \gamma}\left[ (k+1) \gamma_i - \mu_i \right] \\
 b_\mu &= \frac{1}{a_0} \sum_{0 < \gamma \leq \mu} a_\gamma b_{\mu - \gamma} \left[ (k+1) \frac{\gamma_i}{\mu_i} - 1\right].
\end{align*}
Since this expression is valid if and only if $\mu_i \not = 0$, we can sum over all such cases to obtain
\[
\abs{\mu / \mu} b_\mu = \frac{1}{a_0} \sum_{0 < \gamma \leq \mu} a_\gamma b_{\mu - \gamma} \left[ (k+1) \abs{\gamma/\mu} - \abs{\mu/\mu} \right],
\]
which, through dividing by $\abs{\mu / \mu}$, yields the expression \eqref{eq:nakos}.
\end{proof}

The moment expansion series that we use have a zero free term, which is essential for solving for $b_\mu$ in the last step of the proof. This can be resolved by adding an extra step which uses the binomial expansion
\[
[(A(x) + 1) - 1]^n = \sum_{0 \leq k \leq n} {n \choose k} (-1)^{n-k} \left[A(x) + 1\right]^k,
\]
where $A(x)$ is the series without a free term. In general, this step does involve $n$ extra computations, however, the moment conversions such as \eqref{eq:1dmomentconversion} require all the powers between $0$ and $n$ anyway, so there is no additional cost involved for our purposes.

\section{Fast Iterative Algorithm for Entropy Optimization}
\label{sec:fime}

This algorithm has been described in \cite{bandy2005}, and it is based on explicit discretization of the entropy functional, which reduces computation of moments to a matrix multiplication. For completeness, we present it here in a distilled form.

First, assume the domain is $[0,1]$ interval, and choose points $x_k$ with quadrature weights $w_k$ for $k = 1,\dots, K$, i.e.,
\begin{align*}
  \int_0^1 f(x) dx \approx \sum_{j=1}^J f(x_j) w_j.
\end{align*}
For an arbitrary distribution $p(x)$, we will write $\mathbf{p} = (p_j)$, $p_j := p(x_j)$, and $\mathbf{\tilde p} = (\tilde p_j)$, $\tilde p_j = w_j p_j$. To evaluate moments of $p(x)$ we employ the matrix $\mathbf A  = (a_{ij})$, whose rows are evaluations of monomials on the array $x_k$. For first $N$ power moments, $\mathbf A$ will be a row-truncated Vandermonde matrix $a_{ij} = x_j^i$, where $i = 1, \dots, N$, and $j = 1, \dots, K$. The vector of moments $\mathbf \mu = (\mu_i)$ for a discretized distribution $p_j$ is easily evaluated by taking the product $\mathbf \mu = \mathbf A \cdot \mathbf{ \tilde p}$.

Let $\alpha_i$, $i=1,\dots, N$ be the set of the Lagrange multipliers (dual variables), in which the entropy optimization is unconstrained. The primal is then evaluated by function
\begin{align}\label{eq:primal}
  p(\mathbf \alpha)_j = \exp \left[ (\mathbf A^T \cdot  \mathbf \alpha)_j - 1 \right],
\end{align}
with the goal of finding $\alpha^*$ such that $p_j(\alpha^*) \approx p^*_j$.
Constraint deviation vector is 
\begin{align}\label{eq:constraint}
  h_i(\alpha) = \left[ \mathbf A \cdot \mathbf{p}(\alpha) \right]_i - \mu_i.
\end{align}
The optimization program strives to achieve 
\begin{align*}
  \min \left(\mathbf 1^T \mathbf p(\alpha) - \mu^T \alpha\right),
\end{align*}
by cyclically updating components of $\alpha$. Denote $k$th iteration of $\alpha$ by $\alpha^{(k)}$.

Let $k$ be a step counter, and set $i = (k \mod N) + 1$ to be the index of the Lagrange multiplier updated in the $k$th step. Fix the convergence tolerance $\epsilon > 0$, and denote initial step by $k=0$. The initial vector $\alpha^{(0)}$ can be chosen as random numbers in some interval, a constant vector, or some other vector.

Compute the correction factor
\begin{align}\label{eq:correctionfactor}
  \lambda^{(k)} = \ln \frac{ \mu_i}{ \left[\mathbf A \cdot \mathbf p(\alpha^{(k)})\right]_i },
\end{align}
and update $i$th Lagrange multiplier
\begin{align*}
\alpha^{(k+1)}_i = \alpha^{(k)}_i + \lambda^{(k)}& \\
\alpha^{(k+1)}_j = \alpha^{(k)}_j &,\ \text{for } j \not = i.     
\end{align*}
If $\norm{ h(\alpha^{(k+1)}) } < \epsilon$, then $\alpha^* =
\alpha^{(k+1)}$, otherwise, increase $k$ by one, and restart from
computation of the correction factor.

The paper \cite{bandy2005} asserts that the algorithm converges when $\mu_i > 0$ and $a_{ij} \in [0,1]$, $\forall i$, or when $\mu_i < 0$ and $a_{ij} \in [-1,0]$, $\forall i$. When this is not the case, the authors provide a pre-conditioning step that modifies $\mathbf A$ and $\mu$ to ensure convergence.

The presented algorithm can be extended to cases where generalized moments are taken, instead of power moments. In those cases, the moments are not necessarily positive, nor is the matrix $\mathbf A$, so they have to be rescaled before the correction factor formula \eqref{eq:correctionfactor} can be used.

To generalized moments on another interval, let $T_i(x)$ be linearly independent functions which are used to generate generalized moments. The matrix $A$ is then given by elements $a_{ij} = T_i(x_j)$ for $i = 1,\dots,N$, $j = 1,\dots,K$. Choose a positive constant $\delta > 0$ and
let
\begin{align*}
  \text{(needed offset)} && u_i &= -\min_{j}(a_{ij}) + \delta \\
  \text{(scale)} && M_i &= \max_{j} (u_j + a_{ij}) \\
  \text{(scaling factor)} && t_i &= \frac{1}{(M_i + \delta)} 
\end{align*}
The original paper used $\delta = 1$ and $t_i = [N(M_i + \delta)]^{-1}$ but we found that such settings result in somewhat slower convergence rates.

The conditioned matrix $\mathbf A^\prime$ and moment vector $\mu^\prime$ are then computed as
\begin{align*}
  a_{ij}^\prime &= t_i(u_i + a_{ij}) \\
  \mu_{i}^\prime &= t_i(u_i + \mu_{i}),
\end{align*}
which ensures convergence conditions.

Now the original program is modified by replacing the primal formula \eqref{eq:primal}, the correction factor formula \eqref{eq:correctionfactor}, and constraint equation \eqref{eq:constraint} by, respectively,
\begin{align*}
\mathbf p^\prime(\alpha) &= \exp \left[ (\mathbf A^{\prime T} \cdot  \mathbf \alpha)_j - 1 \right],\\
  \lambda^{\prime (k)} &= \ln \frac{ \mu^\prime_i}{ \left[\mathbf A^\prime \cdot \mathbf p^\prime(\alpha^{(k)})\right]_i },\shortintertext{and}\\
  h_i^\prime(\alpha) &= \left[ \mathbf A \cdot \mathbf{p}^\prime(\alpha) \right]_i - \mu_i.
\end{align*}
Note that in the constraint deviation, we evaluate the unconditioned moments of the conditioned primal. The justification is the fact that $\mathbf p^\ast = \mathbf p^\prime(\alpha)$ simultaneously solves the conditioned problem $\mathbf A^\prime \mathbf p = \mu^\prime$ and unconditioned problem $\mathbf A \mathbf p = \mu$, due to linearity of the pre-conditioning step. We, therefore, find the unconditioned constraint deviation condition more intuitive to use, which makes it easier to choose a desired convergence tolerance $\epsilon$.

\pdfbookmark{References}{References} 
\bibliography{references}

\begin{thebibliography}{10}
\expandafter\ifx\csname url\endcsname\relax
  \def\url#1{\texttt{#1}}\fi
\expandafter\ifx\csname urlprefix\endcsname\relax\def\urlprefix{URL }\fi
\expandafter\ifx\csname href\endcsname\relax
  \def\href#1#2{#2} \def\path#1{#1}\fi

\bibitem{mead1984}
L.~R. Mead, N.~Papanicolaou,
  \href{http://www.ams.org/mathscinet/search/publications.html?pg1=MR&s1=MR751523}{{Maximum
  entropy in the problem of moments}}, Journal of Mathematical Physics 25~(8)
  (1984) 2404--2417.
\newline\urlprefix\url{http://www.ams.org/mathscinet/search/publications.html?pg1=MR&s1=MR751523}

\bibitem{jayne1982}
E.~T. Jaynes, {On the rationale of maximum-entropy methods}, Proceedings of the
  IEEE 70~(9) (1982) 939--952.

\bibitem{junk2000}
M.~Junk, \href{http://dx.doi.org/10.1142/S0218202500000513}{{Maximum entropy
  for reduced moment problems}}, Mathematical Models {\&} Methods in Applied
  Sciences 10~(7) (2000) 1001--1025.
\newline\urlprefix\url{http://dx.doi.org/10.1142/S0218202500000513}

\bibitem{bandy2005}
K.~Bandyopadhyay, A.~Bhattacharya, P.~Biswas, D.~Drabold, {Maximum entropy and
  the problem of moments: A stable algorithm}, Physical Review E. Statistical,
  Nonlinear, and Soft Matter Physics 71~(5) (2005) --.
\newblock \href {http://dx.doi.org/10.1103/PhysRevE.71.057701}
  {\path{doi:10.1103/PhysRevE.71.057701}}.

\bibitem{Abramov:2007to}
R.~V. Abramov,
  \href{http://www.ams.org/mathscinet/search/publications.html?pg1=MR&s1=MR2356853}{{An
  improved algorithm for the multidimensional moment-constrained maximum
  entropy problem}}, Journal of Computational Physics 226~(1) (2007) 621--644.
\newline\urlprefix\url{http://www.ams.org/mathscinet/search/publications.html?pg1=MR&s1=MR2356853}

\bibitem{biswa2010}
P.~Biswas, H.~Shimoyama, L.~R. Mead,
  \href{http://www.ams.org/mathscinet/search/publications.html?pg1=MR&s1=MR2599797}{{Lyapunov
  exponents and the natural invariant density determination of chaotic maps: an
  iterative maximum entropy ansatz}}, Journal of Physics A: Mathematical and
  General 43~(12) (2010) 125103, 12.
\newline\urlprefix\url{http://www.ams.org/mathscinet/search/publications.html?pg1=MR&s1=MR2599797}

\bibitem{georg2006}
T.~T. Georgiou,
  \href{http://www.ams.org/mathscinet/search/publications.html?pg1=MR&s1=MR2238070}{{Relative
  entropy and the multivariable multidimensional moment problem}}, Institute of
  Electrical and Electronics Engineers. Transactions on Information Theory
  52~(3) (2006) 1052--1066.
\newline\urlprefix\url{http://www.ams.org/mathscinet/search/publications.html?pg1=MR&s1=MR2238070}

\bibitem{borwe2012}
J.~M. Borwein, \href{http://dx.doi.org/10.1080/02331934.2011.632502}{{Maximum
  entropy and feasibility methods for convex and nonconvex inverse problems}},
  Optimization: A Journal of Mathematical Programming and Operations Research
  61~(1) (2012) 1--33.
\newblock \href {http://dx.doi.org/10.1080/02331934.2011.632502}
  {\path{doi:10.1080/02331934.2011.632502}}.
\newline\urlprefix\url{http://dx.doi.org/10.1080/02331934.2011.632502}

\bibitem{hauck2008}
C.~D. Hauck, C.~D. Levermore, A.~L. Tits,
  \href{http://www.ams.org/mathscinet/search/publications.html?pg1=MR&s1=MR2421338}{{Convex
  duality and entropy-based moment closures: characterizing degenerate
  densities}}, SIAM J. Control Optim. 47~(4) (2008) 1977--2015.
\newblock \href {http://dx.doi.org/10.1137/070691139}
  {\path{doi:10.1137/070691139}}.
\newline\urlprefix\url{http://www.ams.org/mathscinet/search/publications.html?pg1=MR&s1=MR2421338}

\bibitem{blekh2011}
G.~Blekherman, J.~B. Lasserre, \href{http://arxiv.org/abs/1108.0627v1}{{The
  truncated K-Moment problem for closure of open sets}}, arXiv.org math.OC.
\newline\urlprefix\url{http://arxiv.org/abs/1108.0627v1}

\bibitem{ambro2011}
C.-G. Ambrozie, \href{http://arxiv.org/abs/1111.7123v4}{{Multivariate truncated
  moments problems and maximum entropy}}, arXiv.org math.FA.
\newline\urlprefix\url{http://arxiv.org/abs/1111.7123v4}

\bibitem{Eckmann:1985ik}
J.~Eckmann, D.~Ruelle,
  \href{http://link.aps.org/doi/10.1103/RevModPhys.57.617}{{Ergodic theory of
  chaos and strange attractors}}, Reviews of modern physics 57~(3) (1985)
  617--656.
\newblock \href {http://dx.doi.org/10.1103/RevModPhys.57.617}
  {\path{doi:10.1103/RevModPhys.57.617}}.
\newline\urlprefix\url{http://link.aps.org/doi/10.1103/RevModPhys.57.617}

\bibitem{Sigurgeirsson:2001wi}
H.~Sigurgeirsson, A.~M. Stuart,
  \href{http://www.ams.org/mathscinet-getitem?mr=MR1839148}{{Statistics from
  computations}}, in: Foundations of computational mathematics (Oxford, 1999),
  Cambridge Univ. Press, Cambridge, 2001, pp. 323--344.
\newline\urlprefix\url{http://www.ams.org/mathscinet-getitem?mr=MR1839148}

\bibitem{krein1959}
M.~G. Krein, \href{http://www.ams.org/mathscinet-getitem?mr=MR0113106}{{The
  ideas of P. L. \v Ceby\v sev and A. A. Markov in the theory of limiting
  values of integrals and their further development}}, American Mathematical
  Society Translations 12 (1959) 1--121.
\newline\urlprefix\url{http://www.ams.org/mathscinet-getitem?mr=MR0113106}

\bibitem{krein1977}
M.~G. Krein, A.~A. Nudelman, {The Markov moment problem and extremal problems},
  American Mathematical Society, Providence, R.I., 1977.

\bibitem{ander2004}
M.~Andersson, M.~Passare, R.~Sigurdsson,
  \href{http://www.ams.org/mathscinet-getitem?mr=2060426}{{Complex Convexity
  and Analytic Functionals}}, Vol. 225 of Progress in Mathematics,
  Birkh\"auser, Basel, 2004.
\newblock \href {http://dx.doi.org/10.1007/978-3-0348-7871-5}
  {\path{doi:10.1007/978-3-0348-7871-5}}.
\newline\urlprefix\url{http://www.ams.org/mathscinet-getitem?mr=2060426}

\bibitem{mccarthy2005}
J.~E. McCarthy, M.~Putinar,
  \href{http://www.ams.org/mathscinet/search/publications.html?pg1=MR&s1=MR2274973}{{Positivity
  aspects of the Fantappi\'e transform}}, Journal d'Analyse Mathematique 97
  (2005) 57--82.
\newline\urlprefix\url{http://www.ams.org/mathscinet/search/publications.html?pg1=MR&s1=MR2274973}

\bibitem{Henkin1990}
G.~M. Henkin, A.~A. Shananin,
  \href{http://www.ams.org/mathscinet/search/publications.html?pg1=MR&s1=MR1104018}{{Bernstein
  theorems and Radon transform. Application to the theory of production
  functions}}, Vol.~81 of Transl. Math. Monogr., Amer. Math. Soc., Providence,
  RI, 1990.
\newline\urlprefix\url{http://www.ams.org/mathscinet/search/publications.html?pg1=MR&s1=MR1104018}

\bibitem{henki1992}
G.~M. Henkin, A.~A. Shananin,
  \href{http://www.ams.org/mathscinet/search/publications.html?pg1=MR&s1=MR1243019}{{$\mathbb
  C^n$-capacity and multidimensional moment problem}}, Proceedings of the
  Symposium on Value Distribution Theory in Several Complex Variables: On the
  occasion of the Inauguration of Wilhelm Stoll as the Vincent F. Duncan and
  Annamarie Micus Duncan Professor of Mathematics, April 28-29, 1990,
  University of Not 12 (1992) 69--85.
\newline\urlprefix\url{http://www.ams.org/mathscinet/search/publications.html?pg1=MR&s1=MR1243019}

\bibitem{henrici1977vol3}
P.~Henrici, \href{http://www.ams.org/mathscinet-getitem?mr=MR822470}{{Applied
  and computational complex analysis. Vol. 3}}, Vol.~3 of Pure and Applied
  Mathematics (New York), Wiley-Interscience, New York, 1977.
\newline\urlprefix\url{http://www.ams.org/mathscinet-getitem?mr=MR822470}

\bibitem{king2009v1}
F.~W. King, \href{http://www.ams.org/mathscinet-getitem?mr=MR2542214}{{Hilbert
  transforms. Vol. 1}}, Vol. 124 of Encyclopedia of Mathematics and its
  Applications, Cambridge University Press, Cambridge, 2009.
\newline\urlprefix\url{http://www.ams.org/mathscinet-getitem?mr=MR2542214}

\bibitem{fuks1963}
B.~A. Fuks, \href{http://www.ams.org/mathscinet-getitem?mr=MR0168793}{{Theory
  of Analytic Functions of Several Complex Variables}}, Vol.~8 of Translations
  of Mathematical Monographs, American Mathematical Society, Providence, R.I.,
  1963.
\newline\urlprefix\url{http://www.ams.org/mathscinet-getitem?mr=MR0168793}

\bibitem{arons1956}
N.~Aronszajn, W.~Donoghue, {On exponential representations of analytic
  functions in the upper half-plane with positive imaginary part}, Journal
  d'Analyse Mathematique 5~(1) (1956) 321--388.

\bibitem{natte2001}
F.~Natterer, {The mathematics of computerized tomography}, Vol.~32 of Classics
  in Applied Mathematics, Society for Industrial and Applied Mathematics
  (SIAM), Philadelphia, PA, 2001.

\bibitem{palam2004}
V.~Palamodov, {Reconstructive integral geometry}, Vol.~98 of Monographs in
  Mathematics, Birkh\"auser Verlag, Basel, 2004.

\bibitem{koranyi1963}
A.~Kor{\'a}nyi, L.~Puk{\'a}nszky,
  \href{http://www.ams.org/mathscinet/search/publications.html?pg1=MR&s1=MR0159029}{{Holomorphic
  functions with positive real part on polycylinders}}, Transactions of the
  American Mathematical Society 108 (1963) 449--456.
\newline\urlprefix\url{http://www.ams.org/mathscinet/search/publications.html?pg1=MR&s1=MR0159029}

\bibitem{aizenberg1976}
L.~A. Aizenberg, {\v{S}}.~A. Dautov, {Holomorphic functions of several complex
  variables with nonnegative real part. Traces of holomorphic and pluriharmonic
  functions on the \v Silov boundary}, Mat. Sb. (N.S.) 99(141)~(3) (1976)
  342--355, 479.

\bibitem{nakos1993}
G.~Nakos, {Expansions of Powers of Multivariate Formal Power Series}, The
  Mathematica Journal 3~(1) (1993) 45--47.

\bibitem{prest2001}
A.~Prestel, C.~N. Delzell,
  \href{http://www.ams.org/mathscinet-getitem?mr=MR1829790}{{Positive
  polynomials}}, Springer Monographs in Mathematics, Springer-Verlag, Berlin,
  2001.
\newline\urlprefix\url{http://www.ams.org/mathscinet-getitem?mr=MR1829790}

\bibitem{riesz1923}
M.~Riesz, {Sur le probl{\`e}me des moments, Troisi{\`e}me Note}, Arkiv f{\"o}r
  Matematik, Astronomi och Fysik 17~(16) (1923) 1--52.

\bibitem{akhie1965}
N.~I. Akhiezer, {The classical moment problem}, Hafner, New York, 1965.

\bibitem{shoha1943}
J.~A. Shohat, J.~D. Tamarkin, {The Problem of Moments}, American Mathematical
  Society Mathematical surveys, vol. II, American Mathematical Society, New
  York, 1943.

\bibitem{bende1987}
C.~M. Bender, L.~R. Mead, N.~Papanicolaou,
  \href{http://www.ams.org/mathscinet/search/publications.html?pg1=MR&s1=MR887016}{{Maximum
  entropy summation of divergent perturbation series}}, Journal of Mathematical
  Physics 28~(5) (1987) 1016--1018.
\newline\urlprefix\url{http://www.ams.org/mathscinet/search/publications.html?pg1=MR&s1=MR887016}

\bibitem{borwe1991a}
J.~M. Borwein, A.~S. Lewis, \href{http://dx.doi.org/10.2307/2001670}{{On the
  convergence of moment problems}}, Transactions of the American Mathematical
  Society 325~(1) (1991) 249--271.
\newblock \href {http://dx.doi.org/10.2307/2001670}
  {\path{doi:10.2307/2001670}}.
\newline\urlprefix\url{http://dx.doi.org/10.2307/2001670}

\bibitem{lever1996}
C.~D. Levermore,
  \href{http://www.ams.org/mathscinet/search/publications.html?pg1=MR&s1=MR1392419}{{Moment
  closure hierarchies for kinetic theories}}, Journal of Statistical Physics
  83~(5-6) (1996) 1021--1065.
\newline\urlprefix\url{http://www.ams.org/mathscinet/search/publications.html?pg1=MR&s1=MR1392419}

\bibitem{landa1987}
H.~J. Landau,
  \href{http://www.ams.org/mathscinet/search/publications.html?pg1=MR&s1=MR866018}{{Maximum
  entropy and the moment problem}}, American Mathematical Society. Bulletin.
  New Series 16~(1) (1987) 47--77.
\newline\urlprefix\url{http://www.ams.org/mathscinet/search/publications.html?pg1=MR&s1=MR866018}

\bibitem{einbu1977}
J.~M. Einbu,
  \href{http://www.ams.org/mathscinet/search/publications.html?pg1=MR&s1=MR0529946}{{On
  the existence of a class of maximum-entropy probability density functions}},
  Institute of Electrical and Electronics Engineers. Transactions on
  Information Theory IT-23~(6) (1977) 772--775.
\newline\urlprefix\url{http://www.ams.org/mathscinet/search/publications.html?pg1=MR&s1=MR0529946}

\bibitem{cai2010}
Z.~Cai, R.~Li,
  \href{http://www.ams.org/mathscinet/search/publications.html?pg1=MR&s1=MR2729444}{{Numerical
  regularized moment method of arbitrary order for Boltzmann-BGK equation}},
  SIAM Journal on Scientific Computing 32~(5) (2010) 2875--2907.
\newline\urlprefix\url{http://www.ams.org/mathscinet/search/publications.html?pg1=MR&s1=MR2729444}

\bibitem{akhiezer1962}
N.~I. Akhiezer, M.~G. Krein, {Some questions in the theory of moments},
  translated by W. Fleming and D. Prill. Translations of Mathematical
  Monographs, Vol. 2, American Mathematical Society, Providence, R.I., 1962.

\bibitem{birma1992}
M.~S. Birman, D.~R. Yafaev,
  \href{http://www.ams.org/mathscinet/search/publications.html?pg1=MR&s1=MR1202723}{{The
  spectral shift function. The papers of M. G. Krein and their further
  development}}, Algebra i Analiz 4~(5) (1992) 1--44.
\newline\urlprefix\url{http://www.ams.org/mathscinet/search/publications.html?pg1=MR&s1=MR1202723}

\bibitem{polto1998}
A.~G. Poltoratski, \href{http://arxiv.org/abs/math/9601206v1}{{The Krein
  spectral shift and rank one perturbations of spectra}}, Algebra i Analiz
  10~(5) (1998) 143--183.
\newline\urlprefix\url{http://arxiv.org/abs/math/9601206v1}

\bibitem{henrici1977vol2}
P.~Henrici, \href{http://www.ams.org/mathscinet-getitem?mr=MR1164865}{{Applied
  and computational complex analysis. Vol. 2}}, Vol.~2 of Wiley Classics
  Library, Wiley-Interscience, New York, 1977.
\newline\urlprefix\url{http://www.ams.org/mathscinet-getitem?mr=MR1164865}

\bibitem{encyc15}
{Commutative harmonic analysis. I}, Vol.~15 of Encyclopaedia of Mathematical
  Sciences, Springer-Verlag, Berlin, 1991.

\bibitem{Faraut1994}
J.~Faraut, A.~Kor{\'a}nyi, {Analysis on symmetric cones}, Oxford Mathematical
  Monographs, The Clarendon Press Oxford University Press, New York, 1994.

\bibitem{koranyi1965}
A.~Kor{\'a}nyi,
  \href{http://www.ams.org/mathscinet/search/publications.html?pg1=MR&s1=MR0200478}{{The
  Poisson integral for generalized half-planes and bounded symmetric domains}},
  Annals of Mathematics 82 (1965) 332--350.
\newline\urlprefix\url{http://www.ams.org/mathscinet/search/publications.html?pg1=MR&s1=MR0200478}

\bibitem{verblunsky1947}
S.~Verblunsky, {On the initial moments of a bounded function}, Proc. Cambridge
  Philos. Soc. 43 (1947) 275--279.

\bibitem{delange1950}
H.~Delange, {On two theorems of S. Verblunsky}, Proc. Cambridge Philos. Soc. 46
  (1950) 57--66.

\bibitem{henrici1956}
P.~Henrici, {Automatic computations with power series}, Journal of the
  Association for Computing Machinery 3 (1956) 10--15.

\end{thebibliography}
\bibliographystyle{elsarticle-num}

\end{document}